\documentclass[11pt]{article}
\usepackage{amsmath}
\usepackage{amssymb}
\usepackage{latexsym}
\usepackage{amsthm}
\usepackage{mathrsfs}
\usepackage{fancyhdr}
\usepackage{amsfonts}
\usepackage{amssymb}
\usepackage{graphicx}
\usepackage{latexsym, bm}


\textwidth=160 mm   \textheight=230 mm   \hoffset -1.8cm \voffset -1.6cm


\newtheorem{thm}{Theorem}[section]
\newtheorem{lem}[thm]{Lemma}
\newtheorem{cor}[thm]{Corollary}

\newtheorem{conj}[thm]{Conjecture}

\DeclareGraphicsExtensions{.eps,.eps.gz}
\DeclareGraphicsRule{*}{eps}{*}{}


\title{A Maximum Resonant Set of Polyomino Graphs
\thanks{This work is supported by NSFC (grant no. 11371180).}}
\author{Heping Zhang\footnote{Corresponding author.}, \quad Xiangqian Zhou
\\\small{ School of Mathematics and Statistics, Lanzhou University, Lanzhou, Gansu 730000, P. R. China}
\\\small{E-mail addresses: zhanghp@lzu.edu.cn, ~zhouxiangqian0502@126.com}}
\date{}

\begin{document}

\maketitle


\begin{abstract}

A polyomino graph $H$ is a connected finite subgraph of the infinite plane grid such that
each finite face is surrounded by a regular square of side length one and each edge belongs to at least one square.
In this paper, we show that if $K$ is a maximum resonant set of $H$, then $H-K$ has a unique perfect matching.
We further prove that the maximum forcing number of a polyomino graph is equal to its Clar number.
Based on this result, we have that the maximum forcing number of a polyomino graph
can be computed in polynomial time. We also show that if $K$ is a maximal alternating set of $H$, then $H-K$ has a unique perfect matching.

\vspace{0.3cm}
\noindent \textbf{Keywords:} Polyomino graph;  Resonant set; Clar number; Forcing number; Alternating set.
\noindent \textbf{AMS 2000 subject classification :} 05C70, 05C90, 92E10

\end{abstract}


\section{Introduction}

There are two families of interesting plane bipartite graphs, hexagonal systems \cite{19} and
polyomino graphs \cite{26}, which often arise in some real-world problems.
A hexagonal system with a perfect matching is viewed as the carbon-skeleton of a benzenoid hydrocarbon \cite{4,7}.
The dimer problem in crystal physics is to count perfect matchings of polyomino graphs \cite{10}.

Consider an $m\times n$ chessboard $P$ (i.e., one special type of polyomino graphs) with $mn$ even.
In how many ways can this board be covered by dominoes ($1\times 2$ rectangles or dimers)?
In graph theoretics terms, this problem is equivalent to the counting of perfect matching of its inner
dual graph (for the definition of inner dual graph, see \cite{26}).
A \emph{domino pattern} of  $P$ is obtained by paving or tiling dominoes.
There is a one-to-one correspondence between the domino patterns of $P$ and perfect matchings of
the inner dual of $P$. Kasteleyn \cite{10} developed a so-called ``Pfaffian method'' and
derived the explicit expression of the number of ways for covering $P$ by dominoes.

In general, perfect matching existence \cite{27}, elementary components \cite{Ke, 22}, matching forcing number \cite{16} and maximal resonance \cite{13} of polyomino graphs have been investigated.
In addition, polyomino graphs are also models of many interesting combinatorial subjects, such as hypergraphs
\cite{2}, domination problem \cite{5,6}, rook polynomials \cite{15}, etc.

The concept of resonance originates from the conjugated circuits method \cite{9,21}.
Conjugated or resonant circuits (i.e. alternating cycles in mathematics \cite{14})
appeared in Clar's aromatic sextet theory \cite{4} and Randi\'{c}'s conjugated circuit model
\cite{17,18}. Klein \cite{11} emphasized the connection of Clar's ideas with the conjugated circuits method.
In the Clar's aromatic sextet theory \cite{4}, Clar found that an aromatic hydrocarbon molecule with larger number
of mutually resonant hexagons is more stable.

Let $G$ be a plane bipartite graph with a  \emph{perfect matching} (or Kekul\'{e} structure in chemistry)
$M$ (i.e. a set of independent edges covering all vertices of $G$).
A cycle of $G$ is called an $M$-\emph{alternating cycle} if its edges appear alternately in $M$ and off $M$.
A face  $f$ is said to be $M$-\emph{resonant} or -{\em alternating} if its boundary is an $M$-alternating cycle.
Let $\mathcal{H}$ be a  set  of finite faces (the intersection is allowed) of $G$. $\mathcal{H}$ is called an $M$-\emph{alternating set} if all faces in $\mathcal{H}$ are $M$-resonant.
Further, $\mathcal{H}$ is called an $M$-\emph{resonant set}  of $G$ if the faces in $\mathcal{H}$ are mutually disjoint and $\mathcal{H}$ is an $M$-alternating set.
Simply, $\mathcal{H}$ is a \emph{resonant set} and alternating set of $G$ if $G$ has a perfect matching $M$ such that $\mathcal{H}$ is an $M$-resonant set and $M$-alternating set respectively.
The cardinality of a maximum resonant set of $G$ is called the \emph{Clar number} of $G$, denoted by $Cl(G)$.

In 1985, Zheng and Chen  \cite{28}  gave an important property  for  a maximum resonant set of a hexagonal system.

\begin{thm}\label{thm-1-1}
\emph{\cite{28}}
Let $H$ be a hexagonal system and $K$ a maximum resonant set of $H$.
Then $H-K$ has a unique perfect matching.
\end{thm}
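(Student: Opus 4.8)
The plan is to prove Theorem~\ref{thm-1-1} by contradiction, reducing to the subsystem enclosed by an alternating cycle and invoking the classical fact that a hexagonal system whose perimeter is alternating must contain a resonant hexagon. First I would note the easy half, which needs nothing about maximality: $H-K$ always has a perfect matching. Indeed, fix a perfect matching $M$ of $H$ for which $K$ is resonant; since the hexagons in $K$ are pairwise disjoint and each has an $M$-alternating boundary, exactly three of the six boundary edges of each $h\in K$ lie in $M$ and together they cover $V(h)$, so the $M$-partner of every vertex of $\bigcup_{h\in K}h$ again lies in that union, and $M$ restricts to a perfect matching $M'$ of $H-K$.

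Suppose now, for contradiction, that $H-K$ has a second perfect matching $M''\neq M'$. Then $D:=M'\triangle M''$ is a nonempty disjoint union of even cycles, each $M'$- and $M''$-alternating, each contained in $H-K$ (hence disjoint from $K$), and each therefore also $M$-alternating in $H$, because $M$ and $M'$ agree on $E(H-K)$. Choose an \emph{innermost} cycle $C$ of $D$ (one with no other cycle of $D$ in its interior) and let $H'=I[C]$ be the subgraph of $H$ consisting of $C$ together with everything it encloses. Then $H'$ is a hexagonal system with perimeter $C$, the restriction $M|_{H'}$ is a perfect matching of $H'$ with $C$ being $(M|_{H'})$-alternating, and $M'|_{H'}$ and $M''|_{H'}$ are perfect matchings of $H'$ whose symmetric difference is exactly $C$ (by the innermost choice). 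Let $K_1$ denote the set of hexagons of $K$ lying inside $H'$; these are disjoint from $C$ and form a resonant set of $H'$.

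The engine of the argument is the classical fact that a hexagonal system carrying a perfect matching whose perimeter is alternating is normal and thus contains a resonant hexagon — and, in the sharper form I actually want, has a \emph{maximum} resonant set realised by a perfect matching that keeps the perimeter alternating. Applying this to $H'$ produces a perfect matching $N$ of $H'$ with $C$ being $N$-alternating, together with a maximum resonant set $R$ of $H'$ that is $N$-resonant. After replacing the global matching $M$ by $M\triangle C$ if necessary — which preserves the resonance of $K$, since $C$ is disjoint from $K$ — I may assume $N$ and $M$ coincide on the edges of $C$, and then the matching $\widehat M$ obtained by using $N$ inside $C$ and $M$ outside is a perfect matching of $H$. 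Because $C$ separates the hexagons of $R$ from those of $K\setminus K_1$, and $\widehat M$ equals $N$ inside and $M$ outside $C$, the set $R\cup(K\setminus K_1)$ is $\widehat M$-resonant in $H$; if $|R|>|K_1|$ this contradicts $Cl(H)=|K|$.

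It remains to treat the case $|R|=|K_1|$, i.e.\ $K_1$ is already a maximum resonant set of $H'$. When $H'\subsetneq H$ this falls to induction on $|V(H)|$: the induction hypothesis gives that $H'-K_1$ has a unique perfect matching, yet $M'$ and $M''$ restrict to two \emph{distinct} perfect matchings of $H'-K_1$ — distinct because they differ along $C$, which, being disjoint from $K_1$, lies entirely in $H'-K_1$. The residual case $C=\partial H$, where $H'=H$ and no reduction in size is available, is the one I expect to be the main obstacle: here $D=\{\partial H\}$, the perimeter of $H$ is $M$-alternating, and $K$ is a maximum resonant set disjoint from $\partial H$, and the task is to rule this out — intuitively, an alternating perimeter ought to let one promote a boundary hexagon to a resonant set and exchange it into $K$, contradicting maximality. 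That step, along with pinning down the precise strength of the ``normality engine'' invoked above, is where the real combinatorial work lies.
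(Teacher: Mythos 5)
Your reduction to the subgraph $I[C]$ bounded by an $(M',M'')$-alternating cycle is the right opening move (it is exactly how the paper handles the polyomino analogue, Theorem \ref{thm-2-1}), and the easy half, that $M$ restricts to a perfect matching of $H-K$, is fine. But the proposal has a genuine gap, and you have located it yourself: both the ``normality engine'' in the sharpened form you need (a perfect matching of $I[C]$ that keeps the perimeter alternating \emph{and} simultaneously realises a maximum resonant set) and the residual case $C=\partial H$ are left unproved, and these are not loose ends but the entire content of the theorem. The case $C=\partial H$ is precisely the configuration: the boundary of a hexagonal system is alternating with respect to some perfect matching, every hexagon of $K$ is internal, and $H-K-\partial(H)$ has a perfect matching (or is empty); what must be shown is that such a $K$ cannot be maximum. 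This is exactly Zheng and Chen's key lemma (whose polyomino analogue is Lemma \ref{lem-2-2} of this paper), and its proof is the hard combinatorial step: an explicit analysis of the staircase of boundary faces $S(i,j)$, $T(i,j)$ which, by switching the matching along suitable face cycles, manufactures a resonant set strictly larger than $K$. Nothing in your argument supplies a substitute for that step.

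Moreover, once that lemma is available, your auxiliary machinery is unnecessary: applying it directly to $I[C]$ with $K^\ast=K\cap I[C]$ (whose faces are internal to $I[C]$ because $C$ avoids all vertices of $K$, and for which $I[C]-K^\ast-C$ carries the restriction of $M$ or is empty) shows $K^\ast$ is not maximum in $I[C]$; since the enlarging switch is local to $I[C]$, the set $K$ itself can be enlarged in $H$, a contradiction, with no induction on $|V(H)|$, no innermost-cycle choice, and no strong ``engine''. Conversely, the engine you invoke is not an off-the-shelf classical fact: the assertion that a maximum resonant set can be realised by a matching keeping a prescribed nice boundary cycle alternating is essentially as deep as the theorem itself, and the example in the paper's concluding remarks (the graph $G'$ bounded by a nice cycle, with a maximum resonant set $S'$ such that $G'-S'$ has two perfect matchings) shows how delicately such compatibility statements depend on the class of graphs considered. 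So as it stands the proposal reduces Theorem \ref{thm-1-1} to two unproved claims, one of which is the theorem's core.
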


A \emph{forcing set} of a perfect matching $M$ of a graph $G$ is a subset $S\subseteq M$ such that $S$
is not contained in any other perfect matching of $G$.
The \emph{forcing number} of a perfect matching $M$, denoted by $f(G,M)$,
is the cardinality of a minimum forcing set of $M$. The \emph{maximum forcing number} of $G$ is
the maximum value of forcing numbers of all perfect matchings of $G$, denoted by $F(G)$.
The concept of forcing number of graphs was originally introduced for benzenoid systems by Harary et al. \cite{8}.
The same idea appeared in an earlier paper \cite{12} of Klein and Randi\'{c}
by the name ``innate degree of freedom''. Most known results on forcing number are referred to \cite{3}.

Pachter and  Kim revealed a minimax result that connects the forcing number of a perfect matching and its alternating cycles
as follows.

\begin{thm}\label{thm-1-2}
\emph{\cite{16}} Let $G$ be a plane bipartite graph with a perfect matching.
Then for any perfect matching $M$ of $G$, $f(G,M)=c(M)$,
where $c(M)$ denote the maximum number of disjoint $M$-alternating cycles in $G$.
\end{thm}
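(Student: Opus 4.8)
The plan is to prove $f(G,M)\ge c(M)$ by an elementary switching argument and $f(G,M)\le c(M)$ by reducing to a known min--max theorem for planar digraphs. First I would set up the standard \emph{switching characterization} of forcing sets: a subset $S\subseteq M$ is a forcing set of $M$ if and only if $S$ meets every $M$-alternating cycle of $G$. Indeed, if an $M$-alternating cycle $C$ avoids $S$, then $M\triangle C$ is a perfect matching different from $M$ still containing $S$; conversely, if $M'\ne M$ is a perfect matching with $S\subseteq M'$, then $M\triangle M'$ is a vertex-disjoint union of $M$-alternating cycles, each avoiding $S$ (its $M$-edges lie in $M\setminus M'$ and its non-$M$-edges in $M'\setminus M$). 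I would also record the small observation that any two $M$-alternating cycles through a common vertex $v$ must share the matching edge at $v$ (that edge lies on every $M$-alternating cycle through $v$); hence for $M$-alternating cycles the notions ``vertex-disjoint'', ``edge-disjoint'' and ``no common $M$-edge'' all coincide. Given $c(M)$ pairwise disjoint $M$-alternating cycles, every forcing set must contain a matching edge from each, and by the observation these matching edges are distinct, so $f(G,M)\ge c(M)$.

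For the reverse inequality I would pass to a plane digraph $\vec G$: writing $(X,Y)$ for the bipartition of $G$, orient every edge of $M$ from $Y$ to $X$ and every other edge from $X$ to $Y$. A direct check (the only arc entering a vertex of $X$ is its matching arc, and the only arc leaving a vertex of $Y$ is its matching arc) shows that the edge sets of directed cycles of $\vec G$ are exactly the edge sets of $M$-alternating cycles of $G$. By the switching characterization, $f(G,M)$ is the minimum size of a feedback edge set of $\vec G$ that happens to be contained in $M$. The next step is to see that this equals the minimum size of an \emph{arbitrary} feedback edge set of $\vec G$: given any feedback edge set $F$, replace each non-matching arc $(x,y)\in F$ with $x\in X$ by the unique matching arc entering $x$; since every directed cycle through $(x,y)$ also uses that matching arc, the swap preserves the feedback property and cannot increase the size. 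Thus $f(G,M)$ equals the minimum feedback edge set size of the planar digraph $\vec G$.

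Finally I would invoke the min--max theorem for planar digraphs: in a planar digraph the minimum size of a feedback edge set equals the maximum number of arc-disjoint directed cycles. (This is the plane-dual form of the Lucchesi--Younger theorem: planar duality turns directed cycles into directed bonds, a feedback edge set into a dijoin, and arc-disjoint directed cycles into arc-disjoint directed bonds.) Combining everything, $f(G,M)$ equals the maximum number of arc-disjoint directed cycles of $\vec G$, i.e.\ the maximum number of edge-disjoint $M$-alternating cycles of $G$, which by the first paragraph is exactly $c(M)$.

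The crux is this last invocation. One must either quote the planar-digraph min--max (and hence rely on the nontrivial Lucchesi--Younger theorem) or reprove it in the present setting; either way the delicate point is the planar-duality dictionary — choosing the dual orientation correctly, handling the unbounded face and the cut vertices of $G$ (where the dual becomes a multigraph), and verifying that directed bonds of the plane dual of $\vec G$ correspond bijectively to directed cycles of $\vec G$, so that both the covering quantity and the packing quantity transfer faithfully. By contrast, the switching characterization, the bipartite orientation, and the reduction to an unrestricted feedback edge set are all routine.
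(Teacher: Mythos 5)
This theorem is quoted from Pachter and Kim \cite{16}; the present paper gives no proof of it, so there is nothing internal to compare against. Your argument is correct and is essentially the standard (original) proof from that source: the switching characterization of forcing sets, the observation that vertex-disjointness, edge-disjointness and $M$-edge-disjointness coincide for $M$-alternating cycles, the bipartite orientation turning $M$-alternating cycles into directed cycles, the reduction from arbitrary feedback arc sets to ones contained in $M$, and the plane-dual form of the Lucchesi--Younger theorem together give $f(G,M)=c(M)$, with the only nontrivial imported ingredient (the planar min--max) correctly identified and legitimately quoted.
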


By combining Theorems \ref{thm-1-1} and \ref{thm-1-2}, Xu et al. \cite{23}
obtained a relation between the forcing number and Clar number of a hexagonal system as follows.

\begin{thm}\label{thm-1-3}
\emph{\cite{23}} Let $H$ be an elementary hexagonal system. Then $F(H)=Cl(H)$.
\end{thm}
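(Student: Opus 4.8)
The plan is to deduce Theorem~\ref{thm-1-3} from Theorems~\ref{thm-1-1} and \ref{thm-1-2} together with the classical structure theory of hexagonal systems. By Theorem~\ref{thm-1-2} we have $F(H)=\max_M f(H,M)=\max_M c(M)$, the maximum taken over all perfect matchings $M$ of $H$. One inequality is immediate: if $K$ is a maximum resonant set of $H$, realised as an $M$-resonant set, then the $|K|=Cl(H)$ hexagons of $K$ are pairwise disjoint $M$-alternating cycles, so $c(M)\ge Cl(H)$, whence $F(H)\ge Cl(H)$. The work is all in the reverse inequality $F(H)\le Cl(H)$, that is, in showing $c(M)\le Cl(H)$ for every perfect matching $M$ of $H$.

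For the reverse inequality I would use an extremal argument. Among all pairs $(M,\mathcal C)$ in which $M$ is a perfect matching of $H$ with $c(M)=F(H)$ and $\mathcal C$ is a family of $F(H)$ pairwise disjoint $M$-alternating cycles, pick one minimising $\sum_{C\in\mathcal C}|\mathrm{int}[C]|$, where $\mathrm{int}[C]$ denotes the hexagonal subsystem consisting of $C$ together with its interior. The aim is to show that in such an extremal configuration every $C\in\mathcal C$ is a single hexagon; then $\mathcal C$ is an $M$-resonant set, so $Cl(H)\ge|\mathcal C|=F(H)$ and we are done. So suppose some member of $\mathcal C$ is not a hexagon, and pick a non-hexagon $C_0\in\mathcal C$ whose interior contains no other member of $\mathcal C$ (assume for now that such a $C_0$ exists). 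Then $H_0:=\mathrm{int}[C_0]$ is a hexagonal system with at least two hexagons, $M$ restricts to a perfect matching $M_0$ of $H_0$, and the perimeter $C_0$ of $H_0$ is $M_0$-alternating.

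The core is a reduction step: $H_0$ has a hexagon $s_0$ that is resonant with respect to some perfect matching $N_0$ of $H_0$ and that is not the whole perimeter, so $\mathrm{int}[s_0]\subsetneq\mathrm{int}[C_0]$. To see this, note that flipping the $M_0$-alternating perimeter $C_0$ produces a second perfect matching of $H_0$, so $H_0$ has more than one perfect matching and hence (a classical fact) a resonant hexagon; taking a maximum resonant set $K_0$ of $H_0$ and applying Theorem~\ref{thm-1-1} to $H_0$, the graph $H_0-K_0$ has a unique perfect matching, and this rigidity lets one choose $N_0$ and take $s_0\in K_0$ (which cannot exhaust $H_0$, since $|K_0|\ge1$ while $H_0$ has at least two hexagons). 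Granting this, set $M':=(M\setminus M_0)\cup N_0$, a perfect matching of $H$, and replace $C_0$ in $\mathcal C$ by $s_0$. Since $C_0$ encloses no other member of $\mathcal C$, the vertex set of $\mathrm{int}[C_0]$ meets no other $C\in\mathcal C$ (two disjoint plane cycles have disjoint closed interiors unless nested), so $(\mathcal C\setminus\{C_0\})\cup\{s_0\}$ is again a family of $F(H)$ pairwise disjoint cycles, and it is $M'$-alternating because $M'$ agrees with $M$ off $H_0$ while $s_0$ is $N_0$-resonant. But $\sum_{C}|\mathrm{int}[C]|$ has strictly decreased (from $|\mathrm{int}[C_0]|\ge 2$ to $1$ on the replaced cycle), contradicting the choice of $(M,\mathcal C)$.

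The step I expect to be the real obstacle is making this reduction work when no non-hexagon member of $\mathcal C$ is innermost, i.e.\ when every non-hexagon $C_0\in\mathcal C$ encloses some other member of $\mathcal C$. Choosing $C_0$ innermost among non-hexagons, the enclosed members of $\mathcal C$ are all single hexagons, and — since a hexagonal system needs at least seven hexagons to contain a hexagon in its interior — this case arises only for fairly large $C_0$. One must then run the reduction inside the region $R_0$ obtained from $\mathrm{int}[C_0]$ by deleting the interiors of the $\mathcal C$-members nested in $C_0$: one checks $R_0$ is bounded by $M$-alternating cycles and is still elementary (with holes), produces a resonant hexagon $s_0$ of $R_0$ avoiding every hole while modifying $M$ only on the interior edges of $R_0$ (so that the nested cycles and everything outside survive), and then concludes exactly as above that $\sum|\mathrm{int}[\cdot]|$ drops. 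Establishing that such a well-placed resonant hexagon always exists — in essence, that a normal hexagonal system with ``alternating holes'' still has a resonant hexagon away from its holes — is the technical heart, and it is here that the Clar-theoretic structure theory and Theorem~\ref{thm-1-1} carry the load; the hypothesis that $H$ is elementary is used throughout precisely so that the subsystems arising inherit normality and these structural results apply.
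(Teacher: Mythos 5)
Your easy direction and your overall extremal framework do match the strategy of the paper (which proves a general sufficient condition, Theorem~\ref{thm-4-1}, by choosing, among matchings of maximum forcing number, a family of disjoint alternating cycles with as many face cycles as possible, and then derives Theorem~\ref{thm-1-3} from it together with Theorem~\ref{thm-1-1}). But there is a genuine gap exactly where you flag it: the case in which the innermost non-hexagon member $C_0$ of $\mathcal{C}$ encloses other members. Your plan there --- keep the enclosed hexagons and find a resonant hexagon of the holed region $R_0$ avoiding the holes, modifying the matching only inside $R_0$ --- rests on a structural claim (``a region bounded by an alternating cycle with alternating hexagonal holes has a resonant hexagon away from its holes'') that you neither prove nor reduce to the quoted theorems; it is not a consequence of Theorem~\ref{thm-1-1} as stated, so the extremal step does not close in precisely the case that carries the difficulty.

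The missing idea, and the way Theorem~\ref{thm-1-1} is actually used, is to discard the enclosed hexagons together with $C_0$ rather than preserve them. Let $C_1,\dots,C_{i-1}$ be the members of $\mathcal{C}$ inside $C_0$ (all face cycles, by the innermost choice) and let $M_c$ be the restriction of $M$ to $I[C_0]$. If $\{C_1,\dots,C_{i-1}\}$ were a maximum resonant set of $I[C_0]$, then by Theorem~\ref{thm-1-1} the graph $I[C_0]-\{C_1,\dots,C_{i-1}\}$ would have a unique perfect matching; but $C_0$ is $M_c$-alternating and vertex-disjoint from these faces, so flipping along $C_0$ yields a second one, a contradiction. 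Hence a maximum resonant set $S$ of $I[C_0]$ satisfies $|S|\ge i$. Taking a perfect matching $M_0$ of $I[C_0]$ that makes every face of $S$ resonant and setting $M'=(M\setminus M_c)\cup M_0$, the family $\bigl(\mathcal{C}\setminus\{C_1,\dots,C_{i-1},C_0\}\bigr)\cup S$ consists of at least $F(H)$ pairwise disjoint $M'$-alternating cycles (all new faces lie in $I[C_0]$, which is disjoint from the surviving members), and it has strictly more face cycles (equivalently, strictly smaller total enclosed area), giving the desired contradiction with no claim about resonant hexagons avoiding holes. With this replacement step your argument closes; the single-hexagon swap you describe in the no-nesting case is just the special case $i=1$, and your detour through Theorem~\ref{thm-1-1} to produce the hexagon $s_0$ there is unnecessary once the replacement is done with a whole maximum resonant set.
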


An alternating set of a graph $G$ is called \emph{maximal}
if it is not properly contained in another alternating set of $G$.
In 2006, Salem and Abeledo obtained the following result.

\begin{thm}
\emph{\cite{20}}
Let $H$ be a hexagonal system and $K$ a maximal alternating set of $H$.
Then $H-K$ has a unique perfect matching.
\end{thm}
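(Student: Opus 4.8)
The plan is to fix a perfect matching $M$ of $H$ for which every hexagon of $K$ is $M$-alternating, and to prove the statement in the sharper form: the restriction of $M$ to $H-K$ is the \emph{unique} perfect matching of $H-K$.

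For existence, put $V_K=\bigcup_{s\in K}V(s)$. Each $s\in K$ is an $M$-alternating $6$-cycle, so its three $M$-edges form a perfect matching of $s$; hence the $M$-mate of every vertex of $s$ lies on $s$, and therefore the $M$-mate of every vertex of $V_K$ lies in $V_K$. Thus $M=M_K\cup M'$ with $M_K:=\{e\in M:e\subseteq V_K\}$ a perfect matching of $H[V_K]$ and $M':=M\setminus M_K$ a perfect matching of $H-K$. For uniqueness, suppose $H-K$ has a perfect matching $M''\ne M'$. Then $M_K\cup M''$ is a perfect matching of $H$, and $M\,\triangle\,(M_K\cup M'')=M'\,\triangle\,M''$ is a nonempty disjoint union of cycles lying inside $H-K$, each of them $M$-alternating. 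So it suffices to derive a contradiction from the existence of an $M$-alternating cycle $C$ with $V(C)\cap V_K=\emptyset$.

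Since $C$ is $M$-alternating, exactly one of the two $C$-edges at each vertex of $C$ is in $M$; hence no $M$-edge joins $V(C)$ to a vertex off $C$, and so the hexagons of $H$ lying in the closed disc bounded by $C$ form a hexagonal system $H'$ with perimeter $C$, on which $N:=M\cap E(H')$ is a perfect matching making the perimeter $C$ an $N$-alternating cycle. The key point is then the following structural fact about hexagonal systems: \emph{if the perimeter of a hexagonal system is an alternating cycle of one of its perfect matchings, then the system has a hexagon that is alternating with respect to that matching and shares an edge with the perimeter.} Applying this to $(H',N)$ gives an $N$-alternating hexagon $h$ of $H'$ sharing an edge $e$ with $C$. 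Then $h$ is an $M$-alternating hexagon of $H$, the two endpoints of $e$ lie on $C$ and hence outside $V_K$, so $V(h)\not\subseteq V_K$; since $V(s)\subseteq V_K$ for every $s\in K$, this forces $h\notin K$. But then $K\cup\{h\}$ is an alternating set of $H$ (witnessed again by $M$) properly containing $K$, contradicting the maximality of $K$. Hence no such cycle $C$ exists, and $H-K$ has a unique perfect matching.

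The main obstacle is the structural fact invoked above, namely producing an alternating hexagon \emph{on} the perimeter. The natural route is induction on the number of hexagons: choose a boundary hexagon $h_0$ whose edges shared with the perimeter $C$ form a single arc (such a hexagon exists, e.g.\ at a convex corner of $C$), and, if $h_0$ is not $N$-alternating, delete $h_0$ and replace that arc by the complementary arc of $\partial h_0$ to obtain a hexagonal system with one fewer hexagon. The difficulty is that this rerouting keeps the perimeter alternating only for certain matching-patterns along $\partial h_0$, so the inductive step must combine the choice of $h_0$ with a case analysis of the perimeter configurations of hexagonal systems (fissures, bays, coves, fjords), and must guarantee that the alternating hexagon produced for the smaller system—being distinct from $h_0$—is still incident with the original $C$, possibly after iterating the reduction. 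That case analysis is where the real work lies; everything else is routine manipulation of perfect matchings.
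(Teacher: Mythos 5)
Your reduction to a single $M$-alternating cycle $C$ disjoint from $V_K$ is correct, and it is the same outer step the paper uses for its polyomino analogue (Theorem 3.2); note also that the paper does not prove the hexagonal statement itself but quotes it from Salem and Abeledo. The genuine gap is the ``structural fact'' you invoke and explicitly leave unproved: it is in fact false as stated. Take coronene: label the central hexagon $c_1c_2\cdots c_6$, the spokes $c_is_i$ ($i=1,\dots,6$), and the rim cycle $s_1a_1b_1s_2a_2b_2\cdots s_6a_6b_6s_1$. Let $N$ consist of the nine rim edges $s_1a_1,\ b_1s_2,\ a_2b_2,\ s_3a_3,\ b_3s_4,\ a_4b_4,\ s_5a_5,\ b_5s_6,\ a_6b_6$ together with the three central edges $c_2c_3,\ c_4c_5,\ c_6c_1$. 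Then $N$ is a perfect matching and the perimeter is $N$-alternating, but each of the six outer hexagons contains exactly two (opposite) edges of $N$, so none of them is $N$-alternating; the only $N$-alternating hexagon is the central one, which shares no edge (indeed no vertex) with the perimeter. Hence, with the matching held fixed, the alternating hexagon ``on the perimeter'' that your argument needs may simply not exist, and no amount of case analysis in your proposed induction can produce it.

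What is true, and what the paper's Lemma 3.1 (the polyomino version) actually exploits, is a weaker statement in which one is allowed to switch the matching along the perimeter: since $C$ is $N$-alternating, $N\oplus C$ is again a perfect matching of $I[C]$, and one seeks a hexagon meeting $C$ that is alternating with respect to $N$ \emph{or} $N\oplus C$ (in the coronene example the flip makes three outer hexagons alternating). This extra freedom costs nothing in your framework: the hexagons of $K$ lying inside $C$ avoid $V(C)$, hence use no edge of $C$ and remain alternating after the flip, so $K\cup\{h\}$ is still an alternating set of $H$ and the contradiction with maximality survives. Proving that corrected lemma is precisely the nontrivial boundary analysis (the $S(i,j)$, $T(i,j)$ configurations together with the two boundary matchings $N_1,N_2$) carried out in the paper's Lemma 3.1 for polyominoes and in the cited reference for hexagonal systems; since your proposal both aims at a false target and defers its proof, it has a real gap at its central step.
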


Motivated by the above work, we will naturally  investigate polyomino graphs.
This paper is mainly concerned with a maximum resonant set of a polyomino graph. By applying Zheng and Chen's approach \cite{28},  we prove that if $K$ is a maximum resonant set of a polyomino graph $G$,
then $G-K$ has a unique perfect matching. For a maximal alternating set
of $G$, this property still holds.
As a corollary,  we have that the maximum forcing number of a polyomino graph is equal to its Clar number.
Based on these results, it can be shown that
the maximum forcing number of a polyomino graph can be computed in a polynomial time, and thus conforms
the conjecture proposed by Xu et al. \cite{23}.


\section{Maximum resonant set}

A \emph{polyomino graph} is a connected finite subgraph of the infinite plane grid
such that each interior face is surrounded by a regular square of side length one
and each edge belongs to at least one square \cite{26}. For a polyomino graph $H$,
the boundary of the infinite face of $H$ is called the \emph{boundary} of $H$,
denoted by $\partial(H)$, and each edge on the boundary is called a \emph{boundary edge} of $H$.
It is well known that polyomino graphs are bipartite graphs. For convenience,
we always place a polyomino graph considered on a plane so that one of the
two edge directions is horizontal and the other is vertical.
Two squares are \emph{adjacent} if they have an edge in common.
A vertex of $H$ lying on the boundary of $H$ is called an \emph{external vertex},
and a vertex not being external is called an \emph{internal vertex}.
A square of $H$ with external vertices is called an \emph{external square}, and a square with no external
vertices is called an \emph{internal square}.
In what follows, we always restrict our attention to polyomino graphs with perfect matchings.

Let $G$ be a graph with a perfect matching $M$ and an $M$-alternating cycle $C$. Then
$M\oplus C(=M\oplus E(C))$ is also a perfect matching of $G$ and $C$ is an
$(M\oplus C)$-alternating cycle of $G$ \cite{27}. Let $M$ and $N$ be two perfect matchings of a graph $G$.
The \emph{symmetric difference} of $M$ and $N$, denoted by $M\oplus N$, is the set of edges contained in
either $M$ or $N$, but not in both, i.e., $M\oplus N=(M\cup N)-(M\cap N)$.
An $(M,N)$-alternating cycle of $G$ is a cycle whose edges are
in $M$ and $N$ alternately. It is well known that the symmetric difference of two perfect matchings
$M$ and $N$ of $G$ is a disjoint union of $(M,N)$-alternating cycles.

We now state our main result as follows.

\begin{thm}\label{thm-2-1}
Let $H$ be a polyomino graph with a perfect matching, and $K$ be a maximum resonant set of $H$.
Then $H-K$ has a unique perfect matching.
\end{thm}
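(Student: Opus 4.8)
The plan is to follow Zheng and Chen's original strategy for hexagonal systems, adapting each step to the geometry of polyomino graphs. Let $K=\{s_1,\dots,s_k\}$ be a maximum resonant set of $H$, so there is a perfect matching $M$ such that each square $s_i$ is $M$-alternating and the $s_i$ are pairwise disjoint. Write $H'=H-K$; then $H'$ inherits the perfect matching $M'=M\cap E(H')$ (the boundary edges of each $s_i$ not in $M$ are removed together with the vertices, and the edges of $s_i$ in $M$ are removed as well, so $M$ restricted to $H'$ is still perfect). Suppose for contradiction that $H'$ has a second perfect matching $N'\neq M'$. Then $M'\oplus N'$ is a nonempty disjoint union of $(M',N')$-alternating cycles in $H'$; pick one such cycle $C$. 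The aim is to use $C$ to build a resonant set of $H$ of size $k+1$, contradicting maximality.

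\textbf{Key steps.} First I would observe that $C$ is an $M$-alternating cycle of $H$ that is disjoint from every square in $K$ (since $C\subseteq H'$). The trouble is that $C$ itself need not be (the boundary of) a single square, so I cannot simply add it to $K$. The standard remedy is to look at the plane region bounded by $C$ and argue that it must contain, or can be rearranged to contain, an extra alternating square. Concretely, consider the set $\mathcal{S}$ of squares of $H$ lying inside $C$ together with the squares of $K$ that lie inside $C$; by an area/parity count on the interior of $C$ (using that $C$ is $M$-alternating, so the interior carries a perfect matching of a subgraph), the number of squares of $K$ strictly inside $C$ is fewer than the number of squares of $H$ inside $C$ that one can make simultaneously alternating. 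The cleanest way to make this precise is induction on the number of squares enclosed by $C$: take $C$ to be an alternating cycle of $H'$ enclosing the \emph{fewest} squares. If no square of $K$ lies inside $C$, then $C$ encloses at least one square, and by a local analysis of a ``corner'' square of the interior (a square $s$ inside $C$ two of whose edges lie on $C$, or which is otherwise extremal) one shows $s$ can be taken $M$-alternating — or, after flipping $M$ along a smaller alternating cycle, one reduces the enclosed area, contradicting minimality. If some square $s_i\in K$ lies inside $C$, then because $s_i$ is $M$-alternating and disjoint from $C$, the region between $C$ and $s_i$ again supports a smaller alternating cycle, again contradicting minimality. Either way we produce a square $s^*$ that is $M$-alternating (possibly after replacing $M$ by $M\oplus C'$ for a suitable alternating cycle $C'$ disjoint from $K$, which keeps every $s_i$ alternating) and disjoint from all of $K$; then $K\cup\{s^*\}$ is a resonant set of size $k+1$, a contradiction.

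\textbf{Main obstacle.} The delicate point — and where the polyomino case genuinely differs from the hexagonal case — is the local ``corner square'' analysis that guarantees an extremal square inside $C$ can be made $M$-alternating without disturbing the squares of $K$. In a hexagonal system every interior cycle behaves nicely, but in a polyomino graph a minimal alternating cycle can wind around internal vertices and the interior can be a thin strip; one must check the handful of possible configurations of the matching $M$ around an extremal square of the interior of $C$ and verify that in each case either that square is already $M$-alternating or there is a strictly shorter $M$-alternating cycle available. I expect this case analysis, together with the bookkeeping needed to ensure the flips never touch a square of $K$ (which is automatic if every flip happens strictly inside $C$, since $C$ avoids $K$), to be the technical heart of the proof. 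The rest — the existence of $M'$, the symmetric-difference decomposition, and the final counting to get size $k+1$ — is routine.
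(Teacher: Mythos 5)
Your opening matches the paper's strategy exactly: assume $H-K$ has two perfect matchings, take an $(M,M')$-alternating cycle $C$ in their symmetric difference, and contradict the maximality of $K$ by working inside the region bounded by $C$. But everything after that point is where the actual content of the paper lies, and it is precisely the part you defer. The paper isolates the work in Lemma~\ref{lem-2-2}: if $K$ is a resonant set of a $2$-connected polyomino consisting only of internal squares and $H-K-\partial(H)$ has a perfect matching (or is empty), then $K$ is not maximum. Applying this to $I[C]$ with $K^{\ast}=K\cap I[C]$ gives the contradiction immediately. The proof of that lemma is the Zheng--Chen corner analysis adapted to squares: one fixes an extremal staircase of boundary squares $S(i,j)$ with adjacent squares $T(i,j)$, proves parity constraints on the numbers $n(i)$, and in each of several cases builds a resonant set strictly larger than $K$ --- crucially, not by adding one square to $K$, but by \emph{deleting} the squares of $K$ lying in a boundary strip and inserting a strictly larger number of squares made alternating by explicit flips ($M_2\oplus T(\cdot,\cdot)\oplus S(\cdot,\cdot)\cdots$).

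This is where your sketch has a genuine gap. You aim for the stronger conclusion that some single square $s^{\ast}$ disjoint from $K$ satisfies that $K\cup\{s^{\ast}\}$ is resonant, and you support it with two unproved assertions: (i) that an extremal ``corner'' square inside a minimal alternating cycle is alternating or permits a strictly smaller alternating cycle --- you yourself flag this case analysis as the technical heart and do not carry it out; and (ii) that if a square $s_i\in K$ lies inside $C$, then the region between $C$ and $s_i$ contains a smaller alternating cycle. Claim (ii) is exactly the troublesome configuration: all the $M$-alternating squares inside $C$ may belong to $K$, and no smaller alternating cycle of $H-K$ need exist there; the paper handles this situation not by shrinking $C$ but by the exchange $\bigl(K\cup S_1\bigr)\backslash\bigl(K\cap H_2\bigr)$-type constructions in Lemma~\ref{lem-2-2}, which remove part of $K$ and add more squares than were removed. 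Since the paper's final section exhibits plane elementary bipartite graphs for which Theorem~\ref{thm-2-1} fails, any correct proof must exploit the square-lattice geometry through exactly this kind of detailed local analysis; asserting that the case check ``is expected to work'' does not discharge it. So the proposal shares the paper's reduction-to-$I[C]$ skeleton but omits, and in part misstates, the substantive argument.
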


Before proving the main theorem, we will deduce the following crucial lemma.

Let $G$ be a plane bipartite graph, $K$ a set of finite faces and $H$ a subgraph of $G$.
By $K\cap H$ we always mean the intersection of  $K$ and the set of faces of $H$.

\begin{lem}\label{lem-2-2}
Let $H$ be a $2$-connected  polyomino graph with a perfect matching,
$K$ a resonant set consisting of internal squares of $H$.
If $H-K-\partial(H)$ has a perfect matching or is an empty graph, then $K$ is not a  maximum resonant set.
\end{lem}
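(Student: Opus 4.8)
The plan is to show that if $K$ is a resonant set of internal squares of $H$ such that $H - K - \partial(H)$ has a perfect matching (or is empty), then we can build a strictly larger resonant set, by trading the "wasted" interior squares for boundary squares along the outer face. Let $M$ be a perfect matching for which $K$ is an $M$-resonant set. First I would record the structure: the boundary $\partial(H)$ of a $2$-connected polyomino graph is a cycle, and since $H$ is bipartite with a perfect matching and $K$ consists only of internal squares, the matching $M$ restricted to $\partial(H)$ together with the matching on $H - K - \partial(H)$ (by hypothesis a perfect matching, or vacuous) gives a perfect matching $M'$ of $H$ that agrees with $M$ off $\partial(H)$; more to the point, $\partial(H)$ itself is covered by an even "ring" of edges. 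The key observation is that the boundary cycle $\partial(H)$ of a polyomino graph decomposes into maximal straight horizontal and vertical segments, and at least one maximal segment of $\partial(H)$ has length $\ge 2$ (indeed, a polyomino with a perfect matching has at least two squares, and its outer boundary must contain a side of length $\ge 2$ unless $H$ is a single square — but a single square has no internal squares, contradicting that $K$ is nonempty; if $K$ is empty the claim that $K$ is not maximum is immediate whenever $H$ has any square, since a single square forms a larger resonant set). So assume $K \neq \emptyset$.

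Next I would exploit resonance along the boundary. Because $H - K - \partial(H)$ has a perfect matching, we can choose a perfect matching $M$ of $H$ so that $\partial(H)$ is an $M$-alternating cycle: orient along $\partial(H)$ and alternate, which is possible precisely because $\partial(H)$ is an even cycle and the inside and outside decouple — the squares of $K$ being internal do not touch $\partial(H)$, so their alternating boundaries are unaffected. Thus $\partial(H)$ is $M$-resonant. Now consider the outer layer of squares — the external squares of $H$ whose boundary shares an edge with $\partial(H)$. Along a maximal boundary segment of length $\ge 2$, I would pick a "brick" of external squares forming an $M$-alternating cycle disjoint from all squares in $K$ (disjointness is automatic since $K$ is internal), and show that replacing is not even needed: simply \emph{adjoin} such an external square $s$ to $K$. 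The pair $K \cup \{s\}$ is still a set of pairwise disjoint faces (external vs. internal), and we must produce a single perfect matching making all of them alternating simultaneously. This is where I switch from $M$ to $M \oplus \partial(H)$ if necessary: toggling the boundary cycle does not disturb the alternation of the internal squares in $K$ (their edges lie strictly inside), and for an appropriate choice among $M$ and $M\oplus\partial(H)$ one of the two outer squares at the ends of a length-$\ge 2$ segment becomes $M$-alternating. Hence $|K| + 1$ faces are simultaneously resonant, so $K$ was not maximum.

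The cleanest way to organize the last step is: let $e_1, e_2$ be two consecutive edges of $\partial(H)$ lying on a common straight segment, let $s$ be the unique square containing both; $s$ is external. The boundary of $s$ is a $4$-cycle; two of its edges lie on $\partial(H)$ and are "parallel" in $M\oplus\partial(H)$-structure, forcing the other two (interior) edges of $\partial s$ to alternate correctly, so $\partial s$ is $(M\oplus\partial(H))$-alternating. Since the two interior edges of $\partial s$ lie in $H - \partial(H)$ and not on any square of $K$ (those being internal, hence sharing no vertex with $\partial(H)$, hence sharing no edge with $s$), the set $K \cup \{s\}$ is $(M\oplus\partial(H))$-resonant, contradicting maximality of $K$. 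The main obstacle I anticipate is the careful verification that some maximal straight segment of $\partial(H)$ has length $\ge 2$ and that the chosen external square $s$ really is disjoint from every square of $K$ — this needs the hypothesis that $K$ contains \emph{only internal} squares, which guarantees no square of $K$ meets $\partial(H)$, and a short case analysis of the corner structure of polyomino boundaries to locate the length-$\ge 2$ segment (using $2$-connectivity to rule out degenerate "pendant" configurations).
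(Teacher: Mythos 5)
There is a genuine gap, and it sits exactly at the step you call ``the cleanest way to organize the last step.'' First, a geometric misstep: two \emph{consecutive} boundary edges lying on a common straight segment are never edges of one unit square (a unit square has no two collinear edges), so the square $s$ ``containing both'' does not exist. What your argument actually needs is a square with two \emph{opposite} edges on $\partial(H)$; that case is indeed immediate (it is precisely Claim~1 in the paper's proof, since both edges then lie in the same matching $N_1$ or $N_2$ of the boundary cycle), but such a square need not exist at all, and the whole difficulty of the lemma is the case where it does not. Second, the central claim that switching between $M$ and $M\oplus\partial(H)$ makes some external square alternating is false. Take an external square $s$ with exactly one edge $e$ on $\partial(H)$: its two side edges are incident with boundary vertices, and those vertices are matched along $\partial(H)$ in both $M$ and $M\oplus\partial(H)$, so the side edges can never be matching edges; hence $s$ is alternating only if the edge of $s$ opposite to $e$ lies in the fixed interior matching $M\cup M'$ --- a condition that toggling the boundary cycle cannot create. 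Third, ``disjointness is automatic since $K$ is internal'' is wrong: an internal square can share vertices and edges with external squares (the centre square of a $3\times 3$ block is internal and shares an edge with four external squares), so $K\cup\{s\}$ need not be a set of disjoint faces.

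These are not repairable by minor adjustments, because augmenting $K$ by a single square is in general impossible under the lemma's hypotheses; that is why the paper's proof is long. Following Zheng and Chen, it fixes the canonical staircase of external squares $S(i,j)$ with the adjacent squares $T(i,j)$, uses maximality to show the horizontal interior edges $e(i,j)$ are forced into $M\cup M'$ step by step along the staircase, and then, at the place where the staircase terminates, performs a chain switch (a symmetric difference with an alternating string of squares such as $T(1,2),T(1,4),\dots,S(1,2),S(1,4),\dots$) and a cardinality count of the form $|S_0|>|K\cap H_0|$ to produce a resonant set strictly larger than $K$ --- a set obtained by \emph{removing} part of $K$ and adding more new squares, not by adjoining one square to $K$. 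Your proposal contains none of this exchange-and-count machinery, and the step it relies on instead does not hold.
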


\begin{figure}[!htbp]
\begin{center}
\includegraphics[height=8.35cm]{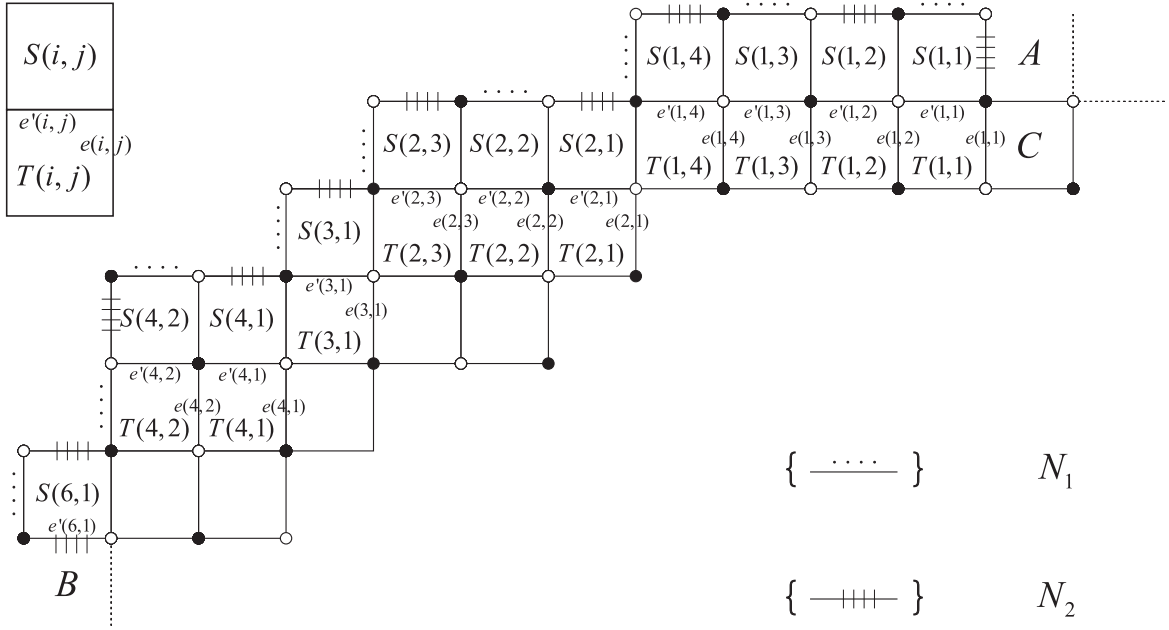}
\caption{{\small Squares $S(i,j)$ and $T(i,j)$,
and edges $e(i,j)$ and $e'(i,j)$ with $m=6$,
n(1)=4, n(2)=3, n(3)=1, n(4)=2, n(5)=0, n(6)=1.
And $A,B\notin H$.}}\label{fig:polyomino-01}
\end{center}
\end{figure}

\begin{proof} We denote by $M$ a perfect matching of $H-K-\partial(H)$, and decompose the edges of
$\partial(H)$ into two perfect matchings $N_1$ and $N_2$ of $\partial(H)$ since $\partial(H)$ is an even cycle.
Then it is clear that $M\cup N_1$ and $M\cup N_2$ are two perfect matchings of $H-K$. Let $M'$ be a perfect matching
of $K$ such that each edge of $M'$ is vertical. Then $M\cup M'$ is a perfect matching of $H-\partial(H)$.
Moreover, $M_1:=N_1\cup (M\cup M')$ and $M_2:=N_2\cup (M\cup M')$ are two perfect matchings of $H$.

Suppose to the contrary that $K$ is a maximum resonant set of $H$.
Adopting the notations of \cite{28}, we can take a series of external squares $\{S(i,j): 1\leq i\leq m, 1\leq j\leq n(i)\}$
which  satisfy that neither square $A$ nor square $B$
is contained in $H$ as shown in  Fig. \ref{fig:polyomino-01}. We denote edges, if any, by $e(i,j)$, $e'(i,j)$,
$1\leq i\leq m$ and $1\leq j\leq n(i)$, and denote the square with edge  $e'(i,j)$ which is adjacent to
$S(i,j)$, if any, by $T(i,j)$, $1\leq i\leq m$ and $1\leq j\leq n(i)$, as shown in Fig. \ref{fig:polyomino-01}.
We first prove the following claims.
\vskip 0.2cm
\noindent\textbf{Claim 1.}  For a pair of parallel edges $e_1$ and $e_2$ of a square $s$ of $H$,
they do not lie simultaneously on the boundary of $H$.
\vskip 0.2cm

If $e_1$ and $e_2$ lie on the boundary of $H$, then $\{e_1, e_2\}\subseteq N_1$ or $N_2$, say $N_1$.
So the square $s$ is $N_1$-alternating, and $K\cup \{s\}$ is a resonant set of $H$,
which contradicts that $K$ is a maximum resonant set of $H$. Hence Claim $1$ holds.
\vskip 0.2cm
\noindent\textbf{Claim 2.}  $n(1)\geq 2$ is even, $n(m)=0$, the square $C\in H$, $n(2)\geq 1$
and $m\geq 3$. If $n(i)>0$, then $T(i,j)\in H$ for all $j$, $1\leq j\leq n(i)$.
\vskip 0.2cm

Claim 1 implies that $n(1)\geq 2$, $n(m)=0$,
and $T(i,j)\in H$ for all $1\leq j\leq n(i)$.
It remains to show that $n(1)$ is even, $C\in H$, $n(2)\geq 1$ and $m\geq 3$.

Since $K$ is a maximum resonant set of $H$,
$e'(i,j)\notin M$ for all $1\leq j\leq n(i)$, $1\leq i\leq m$.
Otherwise, $K\cup \{S(i,j)\}$  is a resonant set of $H$
since the square $S(i,j)$ is either $M_1$-alternating or $M_2$-alternating, a contradiction.
So $e(1,j)\in M\cup M'$ for all $2\leq j\leq n(1)$.

First, we show that $n(1)$ is even.

Suppose to the contrary that $n(1)$ is odd with $n(1)\geq 3$ (See Fig. \ref{fig:polyomino-02}).
We use $H_0$ to denote the subgraph of $H$ formed by squares
$S(1,1),S(1,2),\ldots,S(1,n(1)),T(1,2),T(1,3),\ldots,T(1,n(1)-1)$.
Then we can see that the restriction of $M_2$ on  $H_0$ is a perfect matching of $H_0$.
Let $M_2'=M_2\oplus T(1,2)\oplus T(1,4)\oplus \cdots\oplus T(1,n(1)-1)
\oplus S(1,2)\oplus S(1,4)\oplus \cdots\oplus S(1,n(1)-1)$.
Then $M_2'$ is a perfect matching of $H$ such that each member in the set
$$S_0:=\Big(K\cup \{S(1,1),S(1,3),\ldots,S(1,n(1))\}\Big)\Big\backslash \Big(K\cap H_0\Big)$$
is an $M_2'$-alternating square.
Note that the set $\{S(1,1),S(1,3),\ldots,S(1,n(1))\}$ with cardinality
$\frac{n(1)+1}{2}$, whereas $|K\cap H_0|\leq \frac{n(1)-1}{2}$.
Hence,   $S_0$ is a resonant set of $H$ larger than $K$.
This contradicts that $K$ is a maximum resonant set of $H$.

\begin{figure}[!htbp]
\begin{center}
\includegraphics[height=3.74cm]{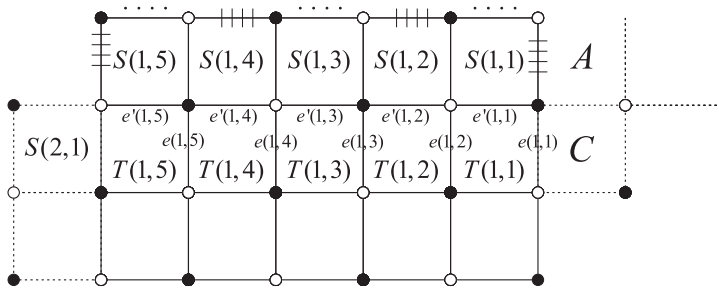}
\caption{{\small Illustration for Claim 2 in the proof of Lemma \ref{lem-2-2}:
$n(1)$ is odd.}}\label{fig:polyomino-02}
\end{center}
\end{figure}

Next, we show that $C\in H$.

Suppose to the contrary that $C\notin H$. Then $e(1,1)\in N_1$.
We use $H_1$ to denote the subgraph of $H$ formed by squares
$T(1,1),T(1,2),\ldots,T(1,n(1)-1)$.
Then the restriction of $M_1$ on  $H_1$ is a perfect matching of $H_1$.
Note that  $T(1,1),T(1,3),\ldots,T(1,n(1)-1)$ are $M_1$-alternating squares and $|K\cap H_1|\leq \frac{n(1)}{2}-1$.
Hence,  we can see that
$$\Big(K\cup \{T(1,1),T(1,3),\ldots,T(1,n(1)-1)\}\Big)\Big\backslash \Big(K\cap H_1\Big)$$
is a resonant set of $H$ larger than $K$, a contradiction.
Similarly, $S(2,1)\in H$ and $n(2)>0$. Moreover, $m\geq 3$.
So we complete the proof of Claim $2$.

Let $\ell$ be an integer with $2\leq \ell\leq m$ such that $n(\ell)$ is even,
and $n(t)$ is odd for all $2\leq t\leq \ell-1$.
It follows from $e(1,n(1))\in M\cup M'$, $e'(i,j)\notin M$ that
$e(i,j)\in M\cup M'$ for all $i$ and $j$, $2\leq i \leq \ell-1$, $1\leq j\leq n(i)$.
We now need to distinguish the following two cases.

\begin{figure}[!htbp]
\begin{center}
\includegraphics[height=4.91cm]{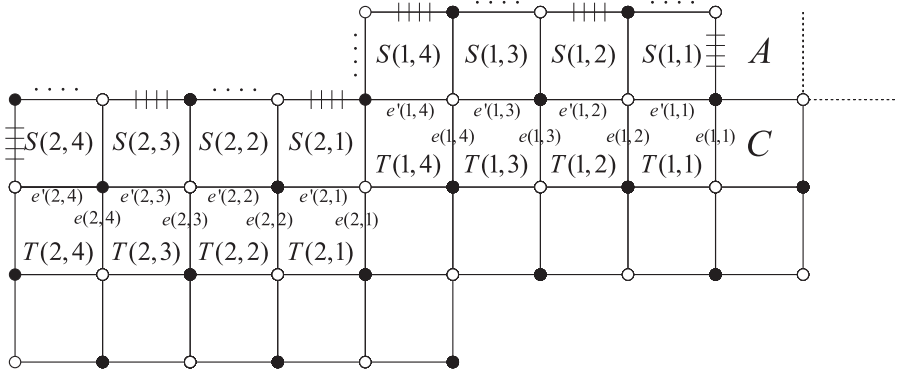}
\caption{{\small Illustration for Subcase 1.1 in the proof of Lemma \ref{lem-2-2}:
$\ell=2$ and $n(\ell)=4$.}}\label{fig:polyomino-03}
\end{center}
\end{figure}

\begin{figure}[!htbp]
\begin{center}
\includegraphics[height=6.08cm]{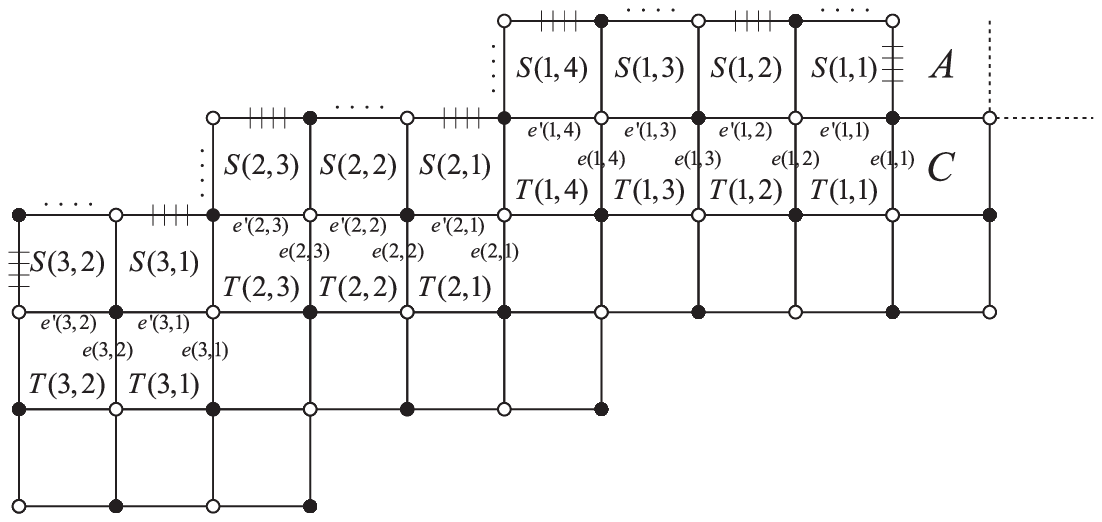}
\caption{{\small Illustration for Subcase 1.2 in the proof of Lemma \ref{lem-2-2}:
$\ell=3$, $n(\ell)=2$.}}\label{fig:polyomino-04}
\end{center}
\end{figure}

{\bf Case 1.} $n(\ell)>0$.
In this case we have $e(\ell,j)\in M\cup M'$ for all $1\leq j\leq n(\ell)$.

\emph{Subcase 1.1:} $\ell=2$ (See Fig. \ref{fig:polyomino-03}).
Let $H_2$ denote the subgraph of $H$ formed by squares in the set
$$\{T(1,j)\mid 2\leq j\leq n(1)\}\cup \{S(2,j)\mid 1\leq j\leq n(2)\}\cup
\{T(2,j)\mid 1\leq j\leq n(2)-1\}.$$
Then the restriction of $M_2$ on  $H_2$ is a perfect matching of $H_2$.
Let
$$M_2''=M_2\oplus T(2,1)\oplus T(2,3)\oplus \cdots\oplus T(2,n(2)-1)
\oplus S(2,1)\oplus S(2,3)\oplus \cdots\oplus S(2,n(2)-1),$$
$$S_1:=\{T(1,2),T(1,4),\ldots,T(1,n(1))\}\cup \{S(2,2),S(2,4),\ldots,S(2,n(2))\}.$$
Then $M_2''$ is a perfect matching of $H$ such that each member of
$(K\cup S_1)\backslash (K\cap H_2)$
is an $M_2''$-alternating square. Note that
$|S_1|=\frac{n(1)+n(2)}{2}$, whereas
$$|K\cap \{T(1,j)\mid 2\leq j\leq n(1)\}|\leq \frac{n(1)}{2}-1,$$
$$|K\cap \{T(2,j)\mid 1\leq j\leq n(2)-1\}|\leq \frac{n(2)}{2}.$$
Hence,  $(K\cup S_1)\backslash (K\cap H_2)$
is a resonant set of $H$ larger than $K$, a contradiction.

\emph{Subcase 1.2:} $\ell\geq 3$ (See Fig. \ref{fig:polyomino-04}).
Let $H_3$ denote the subgraph of $H$ formed by squares in
$$\{T(\ell-1,j)\mid 1\leq j\leq n(\ell-1)\}\cup
\{S(\ell,j)\mid 1\leq j\leq n(\ell)\}\cup
\{T(\ell,j)\mid 1\leq j\leq n(\ell)-1\}.$$
Then the restriction of $M_2$ on  $H_3$ is a perfect matching of $H_3$.
Let
$$M_2'''=M_2\oplus T(\ell,1)\oplus T(\ell,3)\oplus \cdots\oplus T(\ell,n(\ell)-1)
\oplus S(\ell,1)\oplus S(\ell,3)\oplus \cdots\oplus S(\ell,n(\ell)-1),$$
$$S_2:=\{S(\ell,2),S(\ell,4),\ldots,S(\ell,n(\ell))\}\cup \{T(\ell-1,1),T(\ell-1,3),\ldots,T(\ell-1,n(\ell-1))\}.$$
Then $M_2'''$ is a perfect matching of $H$ such that each member of
$(K\cup S_2)\backslash (K\cap H_3)$ is an $M_2'''$-alternating square.
Note that
$|S_2|=\frac{n(\ell)+n(\ell-1)+1}{2}$, whereas
$$|K\cap \{T(\ell-1,j)\mid 1\leq j\leq n(\ell-1)\}|\leq \frac{n(\ell-1)-1}{2},$$
$$|K\cap \{T(\ell,j)\mid 1\leq j\leq n(\ell)-1\}|\leq \frac{n(\ell)}{2}.$$
Hence, $(K\cup S_2)\backslash (K\cap H_3)$ is a resonant set of $H$ larger than $K$, a contradiction.

\begin{figure}[!htbp]
\begin{center}
\includegraphics[height=6.08cm]{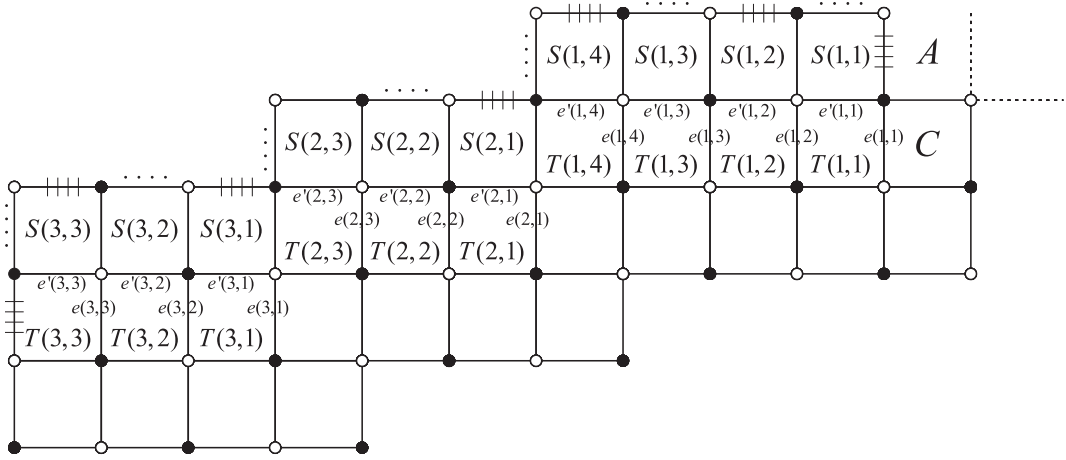}
\caption{{\small Illustration for Case 2 in the proof of Lemma \ref{lem-2-2}:
$\ell=4$, $n(\ell)=0$.}}\label{fig:polyomino-05}
\end{center}
\end{figure}

{\bf Case 2.} $n(\ell)=0$ (See Fig. \ref{fig:polyomino-05}).

Let $H_4$ denote the subgraph of $H$ formed by squares
$T(\ell-1,1),T(\ell-1,2),\ldots,T(\ell-1,n(\ell-1))$.
Note that the left side vertical edge of the square
$T(\ell-1, n(\ell-1))$ belongs to $N_2$, and moreover
each square in $H_4$ is $M_2$-alternating.
Thus we can see that
$$\Big(K\cup \{T(\ell-1,1),T(\ell-1,3),\ldots,T(\ell-1,n(\ell-1))\}\Big)\Big\backslash
\Big(K\cap H_4\Big)$$ is a resonant set of $H$ larger than $K$, a contradiction.

Now the entire proof of the lemma is complete.
\end{proof}

\vskip 0.2cm
\noindent\textbf{Proof of Theorem \ref{thm-2-1}.}
Suppose to the contrary that $H-K$ has two perfect matchings $M$ and $M'$. Then $M\oplus M'$ contains an
$(M,M')$-alternating cycle $C$. Let $I[C]$ denote the subgraph of $H$ consisting of $C$
together with its interior. Put $K^\ast=K\cap I[C]$. Then $K^\ast$ is not a maximum resonant set of $I[C]$
since $I[C]$ and $K^\ast$ satisfy the condition of Lemma \ref{lem-2-2}.
Moreover $K$ is also not a maximum resonant set of $H$,
which contradicts the assumption that $K$ is a maximum resonant set of $H$.
\hfill $\square$

\section{Maximal alternating set}
For a maximal alternating set of polyomino graphs, we can obtain the following results.

\begin{lem}\label{lem-3-1}
Let $H$ be a $2$-connected  polyomino graph with a perfect matching,
$K$ an alternating set consisting of internal squares and $\partial(H)$ the boundary of $H$.
If $H-K-\partial(H)$ has a perfect matching or is an empty graph,
then $K$ is not a maximal alternating set.
\end{lem}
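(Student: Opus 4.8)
The plan is to mimic, step by step, the argument used for Lemma \ref{lem-2-2}, replacing everywhere ``resonant set'' by ``alternating set'' and ``disjoint faces'' by ``faces that need not be disjoint''. The key observation is that almost every contradiction derived in the proof of Lemma \ref{lem-2-2} was obtained by exhibiting a set $S_0$ (or $S_1$, $S_2$, etc.) that is an $M'$-\emph{alternating} set strictly larger than $K$; disjointness of the new faces was used only to conclude that $S_0$ was genuinely a resonant set. Here we do not need that, and in fact we need even less: to contradict maximality of an alternating set it suffices to find a \emph{single} internal square $s\notin K$ together with a perfect matching $N$ of $H$ for which every square in $K\cup\{s\}$ is $N$-alternating. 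So the proof will actually be somewhat simpler.

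First I would set up exactly the same data as in Lemma \ref{lem-2-2}: take $M$ a perfect matching of $H-K-\partial(H)$ (or note $H=\partial(H)$ if that graph is empty), split $\partial(H)$ into $N_1,N_2$, pick a vertical perfect matching $M'$ of $K$ (this makes sense since the squares of $K$, being alternating with respect to \emph{some} common perfect matching and being internal squares, each admit a vertical matching), and form $M_1=N_1\cup M\cup M'$, $M_2=N_2\cup M\cup M'$. Next, assume for contradiction that $K$ is a maximal alternating set. The analogue of Claim 1 is immediate: if a square $s$ has two parallel boundary edges they lie in a common $N_i$, hence $s$ is $N_i$-alternating and also $M_i$-alternating since $M_i\supseteq N_i$ on $\partial(H)$ and $s$'s other two edges are matched by $M\cup M'$; but then — since every square of $K$ is also $M_i$-alternating (each square of $K$ is alternating with respect to $M_1$ or $M_2$ by construction: its vertical edges are in $M'$ and its horizontal edges are in $N_i\cup M$ appropriately) — $K\cup\{s\}$ is an $M_i$-alternating set, contradicting maximality. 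This already forces $n(1)\ge 2$, $n(m)=0$, and $T(i,j)\in H$ whenever $n(i)>0$, giving the analogue of the structural part of Claim 2. The arguments that $e'(i,j)\notin M$, that $n(1)$ is even, that $C\in H$, that $n(2)>0$ and $m\ge 3$, and the case analysis on $\ell$ (Subcases 1.1, 1.2 and Case 2) then go through verbatim, except that at the end of each case one simply picks one new square among the listed alternating squares not already in $K$ — such a square exists because the list has more squares than $|K\cap H_i|$, exactly as counted there — and observes that adjoining it to $K$ yields a larger alternating set, contradicting maximality.

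The main obstacle, and the only place requiring genuine care, is verifying that in each local modification (passing from $M_2$ to $M_2'$, $M_2''$, etc. by symmetric differences along the $T$- and $S$-squares) \emph{every} square of $K$ remains alternating with respect to the new matching, not merely the new squares we are adding. In the resonant-set setting this was automatic because the squares of $K$ were disjoint from the $H_i$ being modified only when $K\cap H_i$ was removed; here, since alternating-set faces may overlap and we are not removing $K\cap H_i$, I must check that the symmetric-difference operations are supported on edges whose incident squares of $K$ are unaffected, or else restrict attention to squares of $K$ that lie outside the modified region and show the counting still produces a surplus new square there. Concretely, one argues that the flips along $T(i,\text{odd})$ and $S(i,\text{odd})$ only toggle edges interior to $H_i$, so any square of $K$ meeting $H_i$ was already counted in $K\cap H_i$ and the surplus (the strict inequality $|S_i|>|K\cap H_i|$) guarantees a new alternating square disjoint from all the flips; every square of $K$ outside $H_i$ keeps its old alternating matching untouched. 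Once this bookkeeping is pinned down, the contradiction is reached in every case, completing the proof.
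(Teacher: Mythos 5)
The obstacle you flag in your last paragraph is exactly where the proposal breaks down, and the sketched fix does not repair it. To contradict maximality of an alternating set you must exhibit a \emph{single} perfect matching under which \emph{every} square of $K$ together with one new square is alternating; unlike in Lemma \ref{lem-2-2}, the squares of $K\cap H_i$ cannot be discarded. The flipped matchings $M_2'$, $M_2''$, $M_2'''$ may destroy the alternation of squares of $K$ lying in, or sharing edges with, the modified strip $H_i$, and the inequality $|S_i|>|K\cap H_i|$ is purely a cardinality count: it does not produce a new square whose edges avoid all flipped edges. In fact every candidate square in your lists ($S(1,1),S(1,3),\dots$, $T(1,2),T(1,4),\dots$, etc.) is adjacent to a flipped square, and its alternation is verified only with respect to the flipped matching, so ``a new alternating square disjoint from all the flips'' is not guaranteed and is in general false. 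A second, smaller gap: in an alternating set the squares of $K$ may overlap, and two vertically adjacent squares admit no perfect matching by vertical edges alone, so your construction of the all-vertical matching $M'$ of $K$ (and the parenthetical justification for it) does not go through; the paper instead starts directly from a perfect matching $M$ of $H-\partial(H)$ for which all squares of $K$ are $M$-alternating.

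The paper's own proof of Lemma \ref{lem-3-1} sidesteps your obstacle by never modifying the matching at all: only $M_1=M\cup N_1$ and $M_2=M\cup N_2$ are used, under both of which all of $K$ is alternating, and the contradiction comes from locating one square that is already $M_1$- or $M_2$-alternating --- $S(i,j)$ when some $e'(i,j)\in M$, $S(1,1)$ when $C\notin H$, and finally $T(\ell-1,n(\ell-1))$, where $\ell$ is the first index with $n(\ell)=0$ (one of whose vertical boundary edges lies in $N_1$ or $N_2$). Consequently no parity statement about $n(1)$, no choice of $\ell$ by parity, and none of Subcases 1.1, 1.2 are needed; only Claim 1, the facts $e'(i,j)\notin M$, $e(i,j)\in M$, $C\in H$, $n(2)\geq 1$, and the analogue of Case 2 survive from Lemma \ref{lem-2-2}. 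Your proposal would become correct (and shorter) if you abandoned the flip-and-count machinery and argued in this direct way.
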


\begin{proof}
We  adopt the substructure and notations used in the proof of Lemma \ref{lem-2-2}
(See Fig. \ref{fig:polyomino-01}).
Let $M$ be a perfect matching of $H-\partial(H)$ such that all squares in $K$ are $M$-alternating,
and  let $N_1$ and $N_2$ be two perfect matchings of $\partial(H)$.
Then  $M_1:=M\cup N_1$ and $M_2:=M\cup N_2$ are two perfect matchings of $H$.

Suppose to the contrary that $K$ is a  maximal alternating set of $H$.
The following Claim 1 and its proof are the same as Claim 1 of Lemma \ref{lem-2-2}.

\vskip 0.2cm
\noindent\textbf{Claim 1.}  For a pair of parallel edges $e_1$ and $e_2$ of a square $s$ of $H$,
they do not lie simultaneously on the boundary of $H$.
\vskip 0.2cm

\noindent\textbf{Claim 2.}  $n(1)\geq 2$, $n(m)=0$, the square $C\in H$, $n(2)\geq 1$
and $m\geq 3$. $e'(i,j)\notin M$ for all $1\leq j\leq n(i)$, $1\leq i\leq m$. Moreover,
$e(1,j)\in M$ for all $2\leq j\leq n(1)$.
\vskip 0.2cm

Claim 1 implies that $n(1)\geq 2$, $n(m)=0$,
and $T(i,j)\in H$ for all $1\leq j\leq n(i)$, $1\leq i\leq m$.

Since $K$ is a maximal alternating set of $H$,
$e'(i,j)\notin M$ for all $1\leq j\leq n(i)$, $1\leq i\leq m$.
Otherwise, $K\cup \{S(i,j)\}$  is an alternating set of $H$
since the square $S(i,j)$ is either $M_1$-alternating or $M_2$-alternating, a contradiction.
So $e(1,j)\in M$ for all $2\leq j\leq n(1)$.

Now we show that $C\in H$.
Suppose to the contrary that $C\notin H$. Then $e(1,1)\in N_1$ and $S(1,1)$ is $M_1$-alternating.
So $K\cup \{S(1,1)\}$  is an alternating set of $H$, a contradiction.
Symmetrically, $S(2,1)\in H$ and $n(2)\geq 1$. So $m\geq 3$.
Hence Claim $2$ is proved.

\vskip 0.2cm
Let $\ell$ be an integer with $3\leq \ell\leq m$ such that $n(\ell)=0$,
and $n(t)>0$ for all $2\leq t\leq \ell-1$.
It follows from $e(1,n(1))\in M$, $e'(i,j)\notin M$ that
$e(i,j)\in M$ for all $i$ and $j$, $2\leq i \leq \ell-1$, $1\leq j\leq n(i)$.
Note that the left  vertical edge of the square
$T(\ell-1, n(\ell-1))$ belongs to $N_1$ or $N_2$, say $N_1$.
So $T(\ell-1, n(\ell-1))$ is $M_1$-alternating and $K\cup \{T(\ell-1, n(\ell-1))\}$ is an alternating set of $H$,
which contradicts the assumption that $K$ is a maximal alternating set of $H$.
The lemma is proved.
\end{proof}

\begin{thm}\label{thm-3-2}
Let $H$ be a polyomino graph with a perfect matching, and $K$ be a maximal alternating  set of $H$.
Then $H-K$ has a unique perfect matching.
\end{thm}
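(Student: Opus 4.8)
The plan is to mimic exactly the reduction used in the proof of Theorem 2.1, now feeding on Lemma 3.1 instead of Lemma 2.2. Suppose for contradiction that $H-K$ has two distinct perfect matchings $M$ and $M'$. Their symmetric difference $M\oplus M'$ is nonempty and decomposes into disjoint $(M,M')$-alternating cycles, so we may choose one such cycle $C$. Form $I[C]$, the subgraph of $H$ consisting of $C$ together with everything in its interior; this is a $2$-connected polyomino graph (it is a cycle-bounded subregion of the grid, hence connected with no cut vertices, and each of its edges still lies on a square), and it has a perfect matching since $C$ is $M$-alternating, so the restriction of $M$ to $I[C]$ works.

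Next I would set $K^{\ast}=K\cap I[C]$, the set of squares of $K$ that lie inside $C$. Each square in $K^{\ast}$ is $M$-alternating for some perfect matching $M$ of $H$ witnessing that $K$ is alternating, but I must be a little careful: the relevant witness matching on $I[C]$ should make every square of $K^{\ast}$ alternating simultaneously. Since $K$ is an $M_0$-alternating set of $H$ for a single perfect matching $M_0$ of $H$, the restriction $M_0|_{I[C]}$ is a perfect matching of $I[C]$ (here one uses that $C$ is also $M_0$-alternating, or more robustly that any $M_0$-alternating square of $I[C]$ forces no vertex of $C$ to be matched outside $I[C]$); hence $K^{\ast}$ is an alternating set of $I[C]$. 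Moreover $K^{\ast}$ consists of internal squares of $I[C]$: a square of $K^{\ast}$ cannot meet $\partial(I[C])=C$, because $C$ is $(M,M')$-alternating while the squares of $K$ are resonant/alternating in a way incompatible with sharing an edge with $C$ — more directly, $K$ being an alternating set of $H$ and $C$ an alternating cycle forces the squares of $K^\ast$ to avoid $C$ by the standard parity argument used implicitly in \cite{28}. Then $I[C]-K^{\ast}-\partial(I[C])$ has a perfect matching or is empty (its vertices are exactly the internal vertices of $I[C]$ not covered by $K^{\ast}$, and the witness matching restricts to one), so Lemma 3.1 applies and tells us $K^{\ast}$ is not a maximal alternating set of $I[C]$: there is a square $s$ of $I[C]$ with $K^{\ast}\cup\{s\}$ alternating in $I[C]$.

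Finally I would lift this back to $H$. The extra square $s$ is not in $K$ (it is not even in $K^\ast$, and being internal to $I[C]$ it cannot be one of the squares of $K$ lying outside $C$). One then checks that $K\cup\{s\}$ is an alternating set of $H$: combine the witness matching on $I[C]$ realizing $K^{\ast}\cup\{s\}$ with the witness matching $M_0$ on the part of $H$ outside $C$ (they agree on $C$, so they glue to a perfect matching of $H$), under which all squares of $K$ together with $s$ are alternating. This contradicts maximality of $K$, completing the proof. The main obstacle I anticipate is precisely the gluing/restriction bookkeeping in the middle step — verifying that $K^{\ast}$ really consists of \emph{internal} squares of $I[C]$ and that a single perfect matching of $I[C]$ makes all of $K^{\ast}$ alternating — since everything else is a direct translation of the Theorem 2.1 argument with ``resonant'' replaced by ``alternating.''
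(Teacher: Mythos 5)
Your overall reduction is exactly the paper's: choose an $(M,M')$-alternating cycle $C$ in $M\oplus M'$, set $K^{\ast}=K\cap I[C]$, invoke Lemma 3.1, and lift the new square back to $H$. However, the specific claims you make to verify the hypotheses of Lemma 3.1 and to do the lifting are false as stated, and these are precisely the points you yourself flag as the crux. First, the restriction of $M$ to $I[C]$ is \emph{not} a perfect matching of $I[C]$ when $K^{\ast}\neq\emptyset$, since $M$ is a matching of $H-K$ and covers no vertex of the squares in $K^{\ast}$. Second, there is no reason that $C$ is $M_0$-alternating or that $M_0|_{I[C]}$ is a perfect matching of $I[C]$: the witness $M_0$ for $K$ may match vertices of $C$ to vertices outside $I[C]$. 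For the same reason your final gluing fails: the witness matching of $I[C]$ produced by Lemma 3.1 matches every vertex of $C$ along $C$, whereas $M_0$ need not, so ``they agree on $C$'' is unjustified, and $M_0$ restricted to the exterior of $C$ need not even be a perfect matching of that exterior. Also, the internality of $K^{\ast}$ needs no parity argument: $C$ lies in $H-K$, which is obtained by deleting all vertices of the squares in $K$, so every square of $K$ is vertex-disjoint from $C$; hence the squares of $K^{\ast}$ are internal squares of $I[C]$ and the remaining squares of $K$ lie strictly outside $C$.

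The correct bridge is a hybrid matching. Since each square of $K$ is $M_0$-alternating, all four of its vertices are $M_0$-matched inside that square, so $\hat{M}:=M_0|_{V(\bigcup K)}\cup M|_{V(H-K)}$ is a perfect matching of $H$ under which every square of $K$ is alternating and, because $\hat{M}$ coincides with $M$ on $H-K\supseteq C$, the cycle $C$ is $\hat{M}$-alternating. Consequently every vertex of $C$ is $\hat{M}$-matched along $C$, so $\hat{M}$ splits into an interior part (a perfect matching of $I[C]-\partial(I[C])$ making all squares of $K^{\ast}$ alternating, which after removing the squares of $K^{\ast}$ also gives the required perfect matching of $I[C]-K^{\ast}-\partial(I[C])$), a part on $C$ equal to one of the two alternating matchings $N_1,N_2$ of $C$, and an exterior part. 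This verifies the hypotheses of Lemma 3.1 in exactly the form its proof uses. For the lift, note that the witness Lemma 3.1 yields for $K^{\ast}\cup\{s\}$ is the interior part together with $N_1$ or $N_2$; adjoining the exterior part of $\hat{M}$ gives $\hat{M}$ or $\hat{M}\oplus C$, a perfect matching of $H$ making all of $K\cup\{s\}$ alternating (the squares of $K$ are untouched by flipping along $C$ since they avoid $C$), contradicting maximality of $K$. With these repairs your argument becomes a correct, fully detailed version of the paper's proof; as written, the three claims singled out above are genuine gaps.
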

\begin{proof}
Suppose to the contrary that $H-K$ has two perfect matchings $M$ and $M'$. Then $M\oplus M'$ contains an
$(M,M')$-alternating cycle $C$.  Put $K^\ast=K\cap I[C]$. Then $K^\ast$ is not a maximal alternating set of $I[C]$
since $I[C]$ and $K^\ast$ satisfy the condition of Lemma \ref{lem-3-1}.
So $K$ is also not a maximal alternating set of $H$, a contradiction.
\end{proof}

\section{Maximum forcing number}

Motivated by Theorem \ref{thm-1-3}, it is natural to ask the following question:
when does the maximum forcing number of a plane bipartite graph equal its Clar number?
In the following, we shall give a sufficient condition.

Let $G$ be a plane graph with a perfect matching. A cycle $C$ of $G$ is said to be \emph{nice}
if $G$ has a perfect matching $M$ such that $C$ is an $M$-alternating cycle.
Denote by $I[C]$ the subgraph of $G$ consisting of $C$ together with its interior.
A cycle $C$ of $G$ is called a \emph{face cycle} if it is the boundary of some finite face of $G$.
For convenience, we do not distinguish a face cycle with its finite face. 

\begin{thm}\label{thm-4-1}
Let $G$ be a connected plane bipartite graph with perfect matchings.
If for each nice cycle $C$ of $G$ and a maximum resonant set $K$ of $I[C]$,
$I[C]-K$ has a unique perfect matching, then $Cl(G)=F(G)$.
\end{thm}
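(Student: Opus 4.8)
The plan is to establish the two inequalities $Cl(G)\le F(G)$ and $F(G)\le Cl(G)$ separately, using Theorem \ref{thm-1-2} (the Pachter--Kim minimax) to translate forcing numbers into counts of disjoint alternating cycles, and using the hypothesis about maximum resonant sets of subgraphs $I[C]$ to control these cycles. First I would prove the easy direction $Cl(G)\le F(G)$: take a maximum resonant set $K$ of $G$, realized by a perfect matching $M$ so that the $|K|=Cl(G)$ faces of $K$ are mutually disjoint $M$-alternating cycles. Since disjoint face cycles are in particular disjoint $M$-alternating cycles, $c(M)\ge |K|=Cl(G)$, and by Theorem \ref{thm-1-2}, $F(G)\ge f(G,M)=c(M)\ge Cl(G)$.

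The harder direction is $F(G)\le Cl(G)$. Here I would pick a perfect matching $M$ of $G$ achieving $F(G)=f(G,M)=c(M)$, and let $\mathcal{C}=\{C_1,\dots,C_{c(M)}\}$ be a maximum family of pairwise disjoint $M$-alternating cycles. The goal is to produce a resonant set of $G$ of size at least $c(M)$. The natural idea is to replace each $C_i$ by a face cycle inside $I[C_i]$. For a single $M$-alternating cycle $C$, consider $I[C]$ together with the restriction $M_C$ of $M$ to $I[C]$, which is a perfect matching of $I[C]$ (here one needs that the interior vertices of $C$ are all matched by $M$ within $I[C]$, which holds because $C$ is $M$-alternating and $G$ is bipartite plane). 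Now $C$ itself is an $M_C$-resonant set of $I[C]$ of size $1$; but more is true: I want to invoke the hypothesis. Apply it with the nice cycle $C$: a maximum resonant set $K_C$ of $I[C]$ has the property that $I[C]-K_C$ has a unique perfect matching. Since $I[C]$ has at least two perfect matchings ($M_C$ and $M_C\oplus C$), the resonant set must be nonempty, so $Cl(I[C])\ge 1$; in fact I would argue $Cl(I[C])\ge 1$ suffices to extract one face cycle $f_C\subseteq I[C]$ that is alternating with respect to some perfect matching $M'_C$ of $I[C]$.

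The key obstruction — and the step I expect to require the most care — is gluing these local choices back together: the face cycles $f_{C_i}$ chosen in the disjoint regions $I[C_i]$ are automatically pairwise disjoint (since the $I[C_i]$ are disjoint, or at worst share boundary in a controlled way), but I must assemble a single perfect matching $M^*$ of the whole graph $G$ with respect to which every $f_{C_i}$ is alternating. The idea is to start from $M$ and, region by region, replace $M$ on each $I[C_i]$ by the perfect matching $M'_{C_i}$ of $I[C_i]$ under which $f_{C_i}$ is alternating; because the regions $I[C_i]$ are vertex-disjoint (being interiors-and-boundaries of disjoint cycles, with $M$ matching each region internally), this surgery is consistent and yields a perfect matching $M^*$ of $G$ with $\{f_{C_1},\dots,f_{C_{c(M)}}\}$ a set of pairwise disjoint $M^*$-alternating face cycles, i.e.\ an $M^*$-resonant set. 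Hence $Cl(G)\ge c(M)=F(G)$. The delicate points to verify are that the $I[C_i]$ can indeed be taken vertex-disjoint (replacing $\mathcal{C}$ by a family chosen to be minimal with respect to nesting/inclusion of interiors if necessary, so that no $C_j$ lies inside $I[C_i]$), that $M$ restricted to each $I[C_i]$ is a perfect matching of that region, and that the chosen face cycle $f_{C_i}$ genuinely lies in the interior region so that the outside matching is untouched. Once these are in place the two inequalities give $Cl(G)=F(G)$. $\hfill\square$
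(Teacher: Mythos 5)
Your first inequality and the local observations (each $I[C_i]$ inherits a perfect matching from $M$, and the hypothesis forces $Cl(I[C_i])\geq 1$ since $I[C_i]$ has the two matchings $M_{C_i}$ and $M_{C_i}\oplus C_i$) are fine. The gap is the disjointness assumption at the heart of your second inequality: pairwise disjoint $M$-alternating cycles can be \emph{nested}, with $C_j$ lying in the interior of $C_i$, so the regions $I[C_i]$ are not vertex-disjoint, and your region-by-region surgery is not well defined. Your proposed remedy --- re-choosing $\mathcal{C}$ to avoid nesting --- is not available: for the fixed matching $M$ realizing $F(G)=c(M)$ there may simply be no maximum family of disjoint $M$-alternating cycles without nesting (the only candidates can be concentric), and switching to another matching changes $c(M)$ unless you justify it by exactly the kind of exchange argument you have not supplied. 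Moreover, in the nested situation extracting one resonant face per region is quantitatively too weak: if $C_i$ contains $i-1$ members of $\mathcal{C}$ in its interior you need a resonant set of $I[C_i]$ of size at least $i$, and $Cl(I[C_i])\geq 1$ does not give this. That stronger count is precisely where the full hypothesis must be used: if $C_1,\dots,C_{i-1}$ are faces inside $C_i$ that are resonant under the restriction of $M$, then $I[C_i]-\{C_1,\dots,C_{i-1}\}$ still contains the $M$-alternating cycle $C_i$ and hence has two perfect matchings, so by the uniqueness hypothesis $\{C_1,\dots,C_{i-1}\}$ is \emph{not} a maximum resonant set of $I[C_i]$ and a maximum one has size at least $i$.

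This is how the paper closes the argument, and it is the idea missing from your proposal: among all perfect matchings attaining $F(G)=n$, choose $M_1$ and $n$ disjoint $M_1$-alternating cycles $C_1,\dots,C_n$ so that the number of face cycles among them is maximum; if some $C_i$ is not a face, take one whose interior contains only face members, replace the restriction $M_c$ of $M_1$ on $I[C_i]$ by a matching $M_0$ realizing a maximum resonant set $S$ of $I[C_i]$ (so $|S|\geq i$ by the argument above), and observe that $(M_1\setminus M_c)\cup M_0$ is again a matching with $n$ disjoint alternating cycles but strictly more faces --- a contradiction. So all $C_j$ are faces and $Cl(G)\geq n$. Your outline would need to be rebuilt around such an extremal or inductive-on-nesting argument; as written, the step ``the $I[C_i]$ are vertex-disjoint'' is false in general and the argument does not go through.
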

\begin{proof}
Let $F(G)=n$. By the definition of Clar number and Theorem \ref{thm-1-2}, we can see that $Cl(G)\leq n$.
In the following we show $Cl(G)\geq n$.
Define $\mathcal{M}(G)$ as the set of perfect matchings of $G$
whose forcing numbers equal $n$.
By Theorem \ref{thm-1-2}, for any $M\in \mathcal{M}(G)$,
there exist $n$ pairwise disjoint $M$-alternating cycles in $G$.
We choose a perfect matching $M_1$ in $\mathcal{M}(G)$ such that $n$ disjoint $M_1$-alternating cycles
$C_1,C_2,\ldots,C_n$ of $G$ have face cycles as many as possible.

Put $\mathcal{C}=\{C_1,C_2,\ldots,C_n\}$. It suffices to show that all cycles in $\mathcal{C}$ are face cycles.
Otherwise, $\mathcal{C}$ has a non-face cycle member and its interior
contains only face cycle members of $\mathcal{C}$.
Without loss of generality, let  $C_i$ denote such a non-face cycle member of $\mathcal{C}$
and $C_1,C_2,\ldots,C_{i-1}$ are all the face cycles in $\mathcal{C}$  contained in the interior of $C_i$
for some $i$, $1\leq i\leq n$.
Then the restriction of $M_1$ on $I[C_{i}]$
is also a perfect matching of $I[C_{i}]$, denoted by $M_c$.
By the assumption, $\{C_1,C_2,\ldots,C_{i-1}\}$
is  a non-maximum resonant set of $I[C_{i}]$.
Let $S$ be a maximum resonant set of $I[C_{i}]$. Then $|S|\geq i$.
Let $M_0$ be a perfect matching of $I[C_{i}]$
such that all faces in $S$ are $M_0$-resonant.
Let $M_2=(M_1\backslash M_c)\cup M_0$ and
$\mathcal{C}'= S\cup \{C_{i+1},C_{i+2},\ldots,C_n\}$.
Then $M_2$ is a perfect matching of $G$ and each member of $\mathcal{C}'$ is an $M_2$-alternating cycle.
Note that $M_2\in \mathcal{M}(G)$ and $\mathcal{C}'$ contain more face cycles than $\mathcal{C}$.
This contradicts the choices of $M_1$ and $\{C_1,C_2,\ldots,C_n\}$.
\end{proof}

Combining Theorems \ref{thm-4-1} with \ref{thm-2-1} and \ref{thm-1-1}, we immediately obtain the following results.

\begin{cor}\label{cor-4-2}
\emph{\cite{23}} Let $H$ be a hexagonal system with a perfect matching. Then $F(H)=Cl(H)$.
\end{cor}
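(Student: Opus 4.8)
The plan is to verify that a hexagonal system with a perfect matching satisfies the hypothesis of Theorem \ref{thm-4-1}, namely that for every nice cycle $C$ of $H$ and every maximum resonant set $K$ of $I[C]$, the graph $I[C]-K$ has a unique perfect matching. Once this is checked, Theorem \ref{thm-4-1} applied with $G=H$ yields $Cl(H)=F(H)$ directly.

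First I would recall that the interior $I[C]$ of a nice cycle $C$ in a hexagonal system is itself a hexagonal system: it is a connected subgraph of the hexagonal lattice bounded by the cycle $C$, so each of its finite faces is a regular hexagon and each edge lies on at least one hexagon. Moreover, since $C$ is a nice cycle, $I[C]$ inherits a perfect matching from the one on $H$ that makes $C$ alternating (the restriction of $M$ to $I[C]$ is a perfect matching of $I[C]$, as is standard for nice cycles). Thus $I[C]$ is a hexagonal system with a perfect matching, and Theorem \ref{thm-1-1} of Zheng and Chen applies to it: if $K$ is a maximum resonant set of $I[C]$, then $I[C]-K$ has a unique perfect matching.

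With that observation, the hypothesis of Theorem \ref{thm-4-1} is met by $G=H$, and the theorem gives $Cl(H)=F(H)$, completing the proof. I expect the only point requiring care to be the claim that $I[C]$ is again a (bona fide) hexagonal system carrying a perfect matching — in particular that no degenerate situation arises at the boundary cycle $C$ and that $C$ being nice really does transfer a perfect matching to $I[C]$; this is routine but is the place where the argument would break if one were careless. Everything else is a direct invocation of Theorems \ref{thm-4-1} and \ref{thm-1-1}.
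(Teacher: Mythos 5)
Your proposal matches the paper's argument: the paper obtains this corollary precisely by combining Theorem \ref{thm-4-1} with Theorem \ref{thm-1-1}, i.e.\ by noting that for any nice cycle $C$ the subgraph $I[C]$ is a hexagonal system with a perfect matching, so Zheng and Chen's result verifies the hypothesis of Theorem \ref{thm-4-1}. Your write-up is correct and takes essentially the same route, with the added (reasonable) care about $I[C]$ inheriting a perfect matching.
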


\begin{cor}\label{cor-4-3}
Let $P$ be a polyomino graph with a perfect matching. Then $Cl(P)=F(P)$.
\end{cor}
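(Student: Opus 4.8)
The final statement to prove is Corollary \ref{cor-4-3}: for a polyomino graph $P$ with a perfect matching, $Cl(P)=F(P)$.

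\medskip

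The plan is to obtain this immediately by specializing Theorem \ref{thm-4-1} to polyomino graphs. First I would check that the hypothesis of Theorem \ref{thm-4-1} is met: I need that for every nice cycle $C$ of $P$ and every maximum resonant set $K$ of $I[C]$, the graph $I[C]-K$ has a unique perfect matching. Here $I[C]$ denotes $C$ together with its interior, so $I[C]$ is itself a subgraph of the infinite plane grid in which every finite face is a unit square; hence $I[C]$ is again a polyomino graph. Since $C$ is a nice cycle, $P$ has a perfect matching $M$ for which $C$ is $M$-alternating, and restricting $M$ to $I[C]$ shows $I[C]$ has a perfect matching. Therefore Theorem \ref{thm-2-1} applies verbatim to $I[C]$: if $K$ is a maximum resonant set of $I[C]$, then $I[C]-K$ has a unique perfect matching. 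This is exactly the hypothesis of Theorem \ref{thm-4-1}.

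\medskip

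Second, I would invoke Theorem \ref{thm-4-1} with $G=P$. The theorem requires $G$ to be a connected plane bipartite graph with perfect matchings; a polyomino graph is connected by definition, is bipartite (as noted in Section 2), and we are assuming it has a perfect matching, so all hypotheses hold. The conclusion is $Cl(P)=F(P)$, which is the desired statement.

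\medskip

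The only subtlety — and the single step worth spelling out — is the verification that $I[C]$ is a bona fide polyomino graph with a perfect matching, so that Theorem \ref{thm-2-1} may legitimately be applied to it rather than only to $P$ itself; everything else is a direct citation. I expect no genuine obstacle here, since the interior of an alternating cycle in the grid is automatically a union of unit squares and the restricted matching handles the perfect-matching requirement. Thus the proof is essentially the one-line combination "Theorem \ref{thm-2-1} $\Rightarrow$ hypothesis of Theorem \ref{thm-4-1} $\Rightarrow$ $Cl(P)=F(P)$," exactly as indicated in the sentence preceding Corollary \ref{cor-4-2}.
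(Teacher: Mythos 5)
Your proof is correct and follows exactly the paper's route: the paper obtains Corollary \ref{cor-4-3} by "combining Theorems \ref{thm-4-1} with \ref{thm-2-1}," i.e., noting that $I[C]$ is again a polyomino graph with a perfect matching so Theorem \ref{thm-2-1} supplies the hypothesis of Theorem \ref{thm-4-1}. Your explicit check that $I[C]$ is a polyomino graph with a perfect matching is the only detail the paper leaves implicit, and it is handled correctly.
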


We now give a weakly elementary property of such graphs that satisfies the conditions of Theorem \ref{thm-4-1}.
Let $G$ be a connected plane bipartite graph with a perfect matching. An edge of $G$ is called \emph{allowed} if it lies in
some perfect matching of $G$ and \emph{forbidden} otherwise. $G$ is called \emph{elementary} if each edge of $G$ is allowed.
$G$ is said to be \emph{weakly elementary} if for each nice cycle $C$ of $G$ the interior of $C$ has at
least one allowed edge of $G$ that is incident with a vertex of $C$ whenever the interior
of $C$ contains an edge of $G$ \cite{27}. A face $f$ of $G$ is said to be a \emph{boundary face}
if the boundaries of $f$ and $\partial (G)$ have a vertex in common.

\begin{thm}
Let $G$ be a connected plane bipartite graph with perfect matchings. If
for each nice cycle $C$ of $G$ and a maximum resonant set $K$ of $I[C]$, $I[C]-K$ has a unique
perfect matching, then $G$ is weakly elementary.
\end{thm}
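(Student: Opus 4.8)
The plan is to prove the contrapositive: assuming $G$ is not weakly elementary, I will produce a nice cycle $C$ and a maximum resonant set $K$ of $I[C]$ such that $I[C]-K$ has more than one perfect matching, contradicting the hypothesis. So suppose $G$ is not weakly elementary. Then there is a nice cycle $C$ whose interior contains at least one edge of $G$, yet no allowed edge of $G$ in the interior of $C$ is incident with a vertex of $C$. Fix a perfect matching $M$ of $G$ for which $C$ is $M$-alternating, and let $M_c$ be the restriction of $M$ to $I[C]$; this is a perfect matching of $I[C]$, so in particular $I[C]$ has a perfect matching.

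The key observation is that every edge of $I[C]$ incident with a vertex of $C$ but lying strictly inside $C$ must be forbidden in $I[C]$ as well: any perfect matching of $I[C]$ can be completed (using $M$ outside $I[C]$, since $C$ is $M$-alternating and hence the vertices of $C$ are matched inside $I[C]$ by any perfect matching of $I[C]$) to a perfect matching of $G$, so an allowed edge of $I[C]$ would be an allowed edge of $G$, contradicting the failure of weak elementarity. Consequently, in $I[C]$ every vertex of $C$ is matched by an edge of $C$ in any perfect matching; that is, $E(C)$ decomposes into the two ``alternating halves'' and each perfect matching of $I[C]$ uses one of these two halves on $C$. Now take a maximum resonant set $K$ of $I[C]$. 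Since the vertices of $C$ are forced to be matched along $C$, the face $C$ itself (the outer boundary face of $I[C]$) behaves like a resonant face: I expect one shows $K$ can be taken so that no face of $K$ meets $C$, and then $I[C]-K$ still contains all of $C$ together with whatever edges lie strictly inside. Since there is an edge of $G$ strictly inside $C$, the interior of $I[C]-K$ is nonempty, and one needs to exhibit two distinct perfect matchings of $I[C]-K$.

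To get the two perfect matchings of $I[C]-K$, I would argue as follows. Start from a perfect matching $M_0$ of $I[C]$ witnessing that all faces of $K$ are $M_0$-resonant; restricting $M_0$ away from $K$ gives a perfect matching of $I[C]-K$, and by the structure above this restriction uses one of the two alternating halves of $C$, say $C$ is $M_0$-alternating as well after noting $C$ is nice. Then $M_0 \oplus E(C)$ is another perfect matching of $I[C]$, still having all faces of $K$ $M_0\oplus E(C)$-resonant (because the faces of $K$ are disjoint from $C$, so flipping $C$ does not disturb them), and its restriction to $I[C]-K$ is a second perfect matching of $I[C]-K$ using the other half of $C$. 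These two restrictions differ on $C$, hence are distinct, so $I[C]-K$ does not have a unique perfect matching — this contradicts the hypothesis and completes the proof.

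The main obstacle, and the step I would treat most carefully, is justifying that a maximum resonant set $K$ of $I[C]$ can be chosen to avoid every face incident with $C$, and more precisely that after deleting $K$ the cycle $C$ remains an alternating cycle that can be independently flipped. If some face of $K$ shares an edge or vertex with $C$, flipping $C$ could destroy that face's resonance. The resolution should use the earlier machinery: since no allowed edge inside $C$ touches $C$, the faces of $K$ that are incident to $C$ are in fact ``trivial'' in the sense that $C$ alone already forces their matchings, so they can be exchanged for the outer face $C$ without decreasing $|K|$ — essentially the same swapping idea used in the proof of Theorem~\ref{thm-4-1}. Once that normalization of $K$ is in hand, the flip argument is routine, and the contradiction with the hypothesis of Theorem~\ref{thm-4-1}'s conclusion being assumed yields weak elementarity.
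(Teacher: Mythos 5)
Your overall strategy is the same as the paper's: argue the contrapositive, observe that an edge allowed in $I[C]$ extends (using the matching of $G$ outside $C$, which exists because $C$ is nice and hence its vertices are matched along $C$) to an edge allowed in $G$, conclude that every perfect matching of $I[C]$ matches the vertices of $C$ along $C$, and finally flip $C$ to exhibit two perfect matchings of $I[C]-K$. The genuine gap is precisely the step you yourself flag as the ``main obstacle'': you never prove that a maximum resonant set $K$ of $I[C]$ avoids the faces whose boundaries meet $C$, and the repair you sketch does not work as stated. You cannot ``exchange a boundary face for the outer face $C$'': since the interior of $C$ contains an edge, $C$ is not the boundary of a finite face of $I[C]$, and resonant sets consist of finite faces, so $C$ is not eligible for membership in $K$; likewise the assertion that faces of $K$ incident to $C$ are ``trivial'' because ``$C$ alone already forces their matchings'' is a hope, not an argument, and the swapping device from the proof of Theorem \ref{thm-4-1} (replacing a non-face alternating cycle by a resonant set of its interior) is not the relevant mechanism here.

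What is actually needed, and what the paper uses (it asserts that a maximum resonant set of $I[C]$ contains no boundary faces), is stronger and simpler than any normalization of $K$: under your standing conclusion that every interior edge of $I[C]$ incident with a vertex of $C$ is forbidden in $I[C]$, \emph{no} finite face $f$ of $I[C]$ whose boundary meets $C$ can be resonant at all. Indeed, if some vertex $v\in \partial f\cap V(C)$ has both of its $\partial f$-edges in the interior of $C$, these edges are forbidden, so $v$ cannot be matched along $\partial f$ and $f$ is not resonant. Otherwise every vertex of $\partial f\cap V(C)$ carries a $C$-edge of $\partial f$; since $\partial f\neq C$ (the interior of $C$ contains an edge), there is a vertex $u$ at which $\partial f$ has one edge on $C$ and one interior edge $e'$. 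If $f$ were $M_0$-resonant, then $e'\notin M_0$ and the $C$-edge of $\partial f$ at $u$ lies in $M_0$, so $M_0\oplus E(\partial f)$ is a perfect matching of $I[C]$ containing $e'$, contradicting that $e'$ is forbidden. Hence every resonant face of $I[C]$ is vertex-disjoint from $C$, so \emph{any} maximum resonant set $K$ automatically avoids $C$, and your flip-along-$C$ argument then goes through verbatim (no choice of $K$ is required, which also removes the worry about which quantifier the hypothesis uses). With this lemma supplied your proof coincides with the paper's; without it the argument is incomplete at its decisive point.
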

\begin{proof}Suppose to the contrary that $G$ is not weakly elementary.
Then there exists a nice non-face cycle $C$ of $G$ such that the interior of $C$
has no allowed edges of $G$  incident with vertices of $C$.
It follows that the interior of $C$ has no allowed edges of $I[C]$
are incident with vertices of $C$. So, for every perfect matching $M$ of $I[C]$,
$C$ is $M$-alternating and any maximum resonant set $K$ of $I[C]$
contains no boundary faces of $I[C]$. So $I[C]-K$ has at least two perfect matchings,
a contradiction.
\end{proof}

Xu et al. ever gave  a conjecture as follows, which can be now confirmed.

\begin{conj}\label{conj-1}
\emph{\cite{23}} Let $G$ be an elementary polyomino graph.
Then the maximum forcing number of $G$ can be computed in polynomial time.
\end{conj}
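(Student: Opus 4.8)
\vskip 0.2cm
\noindent\textbf{Towards a proof of Conjecture \ref{conj-1}.}
The plan is to split the task into two independent parts: reducing $F(G)$ to the Clar number, and then computing the Clar number by linear programming. For the first part, note that an elementary polyomino graph has a perfect matching, so Corollary \ref{cor-4-3} applies and yields $F(G)=Cl(G)$. Hence it suffices to exhibit a polynomial-time algorithm that, given an elementary polyomino graph $G$, outputs $Cl(G)$ together with a maximum resonant set realizing it.

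For the second part, I would first record the following reformulation, valid for every plane bipartite graph $G'$ with a perfect matching: a set $K$ of pairwise vertex-disjoint finite faces of $G'$ is a resonant set if and only if $G'-V(K)$ has a perfect matching. Indeed, a perfect matching of $G'-V(K)$ extends to a perfect matching $M$ of $G'$ by choosing, on each (even) face cycle in $K$, the two edges of one of its two perfect matchings, and then every face of $K$ is $M$-alternating; conversely, if $K$ is $M$-resonant then each vertex of a face in $K$ is matched by $M$ to one of its neighbours on that face, so the restriction of $M$ to $V(G')\setminus V(K)$ is a perfect matching of $G'-V(K)$. Using this, $Cl(G)$ equals the optimum of the integer program that maximizes $\sum_{f} x_f$, the sum running over the square faces $f$ of $G$, subject to $x\ge 0$, integrality, and one equality per vertex, $\sum_{e\ni v}y_e+\sum_{f\ni v}x_f=1$, where the $y_e\ge 0$ are auxiliary variables indexed by the edges of $G$: in an integral feasible solution the faces with $x_f=1$ are automatically pairwise disjoint (each vertex lies in at most one of them, since the $y_e$ are non-negative) and the edges with $y_e=1$ form a perfect matching of the complement of their vertex set, so the optimum is precisely $Cl(G)$.

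The decisive step, which I expect to be the main obstacle, is to show that the constraint matrix of this program --- the incidence matrix of vertices against edges and square faces of $G$ --- is totally unimodular, so that the linear relaxation has an integral optimum and $Cl(G)$ is computed (and a maximum resonant set recovered) in polynomial time, either via any polynomial-time linear programming algorithm or via a min-cost-flow reformulation of the relaxation. This is the square-grid counterpart of the polyhedral result of Abeledo and Atkinson that the Clar problem for benzenoid systems is a linear program, and the proof should exploit the planarity of $G$, the two color classes of the underlying grid, and the fact that every square face meets each color class in exactly two vertices, in order to recognize the matrix as a network matrix of a planar digraph. If instead one is content to invoke the literature, the integrality of this relaxation --- equivalently, the existence of a polynomial-time algorithm for the Clar number of polyomino graphs --- may simply be cited, and Conjecture \ref{conj-1} then follows immediately from Corollary \ref{cor-4-3} together with Theorem \ref{thm-1-2}.
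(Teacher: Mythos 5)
Your fallback route is exactly the paper's proof of the conjecture: by Corollary \ref{cor-4-3} one has $F(G)=Cl(G)$, and since an elementary polyomino graph is $2$-connected, the Clar number is computable in polynomial time by the cited Abeledo--Atkinson result (Theorem \ref{thm-4-6}); note that your closing reference should therefore be to Theorem \ref{thm-4-6}, not to Theorem \ref{thm-1-2}, which plays no role at this final step. Your ``primary'' route is not a proof as it stands: the integrality of the LP relaxation of your vertex/edge/face program, which you yourself flag as the main obstacle, is precisely the content of Abeledo and Atkinson's theorem, and phrasing it as total unimodularity of the incidence matrix asks for more than what they establish (and more than is needed) --- unimodularity of the equality-constrained system already yields integral optima for the all-ones right-hand side. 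Your reformulation of resonant sets via perfect matchings of $G-V(K)$ and the resulting integer program are correct and coincide with the Abeledo--Atkinson formulation, so only the citation branch constitutes a complete argument; the paper's Theorem \ref{thm-4-7} additionally handles non-elementary polyominoes by decomposing into elementary (hence $2$-connected) components, which is not required for the conjecture itself.
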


Abeledo and  Atkinson  obatained the following result.

\begin{thm}\label{thm-4-6}
\emph{\cite{1}}  Let $G$ be a $2$-connected plane bipartite graph.
Then the Clar number of $G$ can be computed in polynomial time using linear programming methods.
\end{thm}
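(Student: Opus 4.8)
The plan is to construct, for a $2$-connected plane bipartite graph $G$, a linear program of size polynomial in $n:=|V(G)|$ whose optimum is exactly $Cl(G)$, and then to appeal to the polynomial-time solvability of linear programs. First I would write down the integer program underlying the Clar number. By Euler's formula $G$ has only $O(n)$ bounded faces; introduce a variable $z_f\in\{0,1\}$ for each bounded face $f$, with $z_f=1$ meaning that $f$ is placed in the resonant set, and maximise $\sum_f z_f$ subject to two kinds of constraints: that the chosen faces are pairwise vertex-disjoint, i.e. $\sum_{f:\,v\in\partial f} z_f\le 1$ for every vertex $v$; and that the graph obtained by deleting the vertices of the chosen faces has a perfect matching or is empty. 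By the elementary equivalence that a family of disjoint faces is a resonant set precisely when the deletion of its vertex set is perfectly matchable --- the same cut-and-paste-of-perfect-matchings argument used in the proof of Lemma~\ref{lem-2-2} --- the integral feasible points of this program are exactly the indicator vectors of resonant sets, so its integer optimum equals $Cl(G)$.

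The crux is to encode the ``perfectly matchable after deletion'' requirement by polynomially many linear inequalities in a way that keeps the relaxation integral. Here I would use the structure of $G$: it is bipartite with parts $X$ and $Y$, and since it is $2$-connected each face boundary is an even cycle, so deleting a vertex-disjoint family of faces always removes equally many vertices from $X$ and from $Y$. Hence the only obstruction to a perfect matching of the residual graph is a Hall-type deficiency, which for a plane graph translates, via planar duality, into the existence of a suitable ``alternating'' cut separating an excess of one colour class. Following Abeledo and Atkinson, I would recast the task as a packing--covering pair: pack the bounded faces against a family $\mathcal{D}$ of such alternating cuts --- which correspond through planar duality to paths and cycles in the dual graph $G^*$, with the outer face and the forbidden edges of $G$ treated explicitly --- and prove that the resulting $0/1$ linear system is totally dual integral. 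The TDI property would be obtained by a planar-duality reduction that converts the fractional cut-covering dual into a shortest-path / minimum-cut problem in a planar network, which is integral by the classical planar max-flow--min-cut and Lucchesi--Younger type results.

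Granting total dual integrality, the relaxation attains an integral optimum, which by the correspondence above is the indicator vector of a maximum resonant set, so the LP value equals $Cl(G)$; and since the program has $O(n)$ variables together with either polynomially many constraints or a polynomial-time separation oracle for the cut constraints, it is solved in polynomial time by the ellipsoid method or by interior-point methods, an optimal integral point (hence an actual Clar set) being recoverable by standard uncrossing and rounding. Combined with Corollary~\ref{cor-4-3}, which gives $F(P)=Cl(P)$ for a polyomino graph $P$, this in particular confirms Conjecture~\ref{conj-1}. I expect the \textbf{main obstacle} to be exactly the proof of total dual integrality of the face/cut system --- equivalently, the underlying min--max theorem: one must show that a minimum fractional family of alternating cuts covering all bounded faces can be chosen integral, and making the planar-duality reduction airtight requires careful handling of the outer face, of the forbidden edges, and of the way duality interacts with the bipartition.
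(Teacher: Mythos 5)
Note first that the paper you are writing into does not prove this statement at all: Theorem~\ref{thm-4-6} is quoted verbatim from Abeledo and Atkinson \cite{1} and is used as a black box (together with Corollary~\ref{cor-4-3}) to obtain Theorem~\ref{thm-4-7}. So the only meaningful comparison is with the proof in \cite{1}, and your proposal does not follow it. Abeledo and Atkinson do not work with face variables alone plus a matchability side-condition; they formulate the Clar problem with \emph{both} edge variables $x_e$ and face variables $y_f$, one equality per vertex $v$ of the form $\sum_{e\ni v}x_e+\sum_{f:\,v\in\partial f}y_f=1$, maximizing $\sum_f y_f$, and then prove that this vertex--(edge,face) incidence matrix is \emph{unimodular} for a $2$-connected plane bipartite graph (hence the title of \cite{1}). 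Unimodularity makes every basic optimal solution of the LP relaxation integral, so a single polynomial-size LP already computes $Cl(G)$; no cut constraints, no separation oracle, and no TDI argument are needed.

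The gap in your route is exactly the step you flag at the end: after eliminating the edge variables you must encode ``$G$ minus the chosen face vertices is perfectly matchable'' by linear inequalities in the $z_f$ alone, and you must show the resulting face/alternating-cut system is totally dual integral. You give no construction of the cut family $\mathcal{D}$, no proof that Hall deficiencies of the residual graph dualize to objects for which a Lucchesi--Younger or planar max-flow--min-cut theorem applies, and no argument that the packing LP has integral optima; ``granting total dual integrality'' is precisely the theorem to be proved, and it is not a routine consequence of the cited classical results (the constraints couple vertex-disjointness of faces with global matchability, which is not a crossing-family or dijoin structure off the shelf). As it stands the proposal is a plausible research plan rather than a proof, and the honest fix is to switch to the mixed edge/face formulation of \cite{1}, where the integrality is established by a direct unimodularity computation rather than by polyhedral packing--covering machinery.
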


A polyomino graph $P$ with forbidden edges can be decomposed into some elementary components \cite{22}. A polynomial time algorithm to accomplish this decomposition. Such elementary components are elementary polyominoes and thus 2-connected. Note that the forcing number of any perfect matching $M$ of $P$ equals the sum of forcing numbers of the restrictions of $M$ on its elementary components. So
Theorem \ref{thm-4-6} and Corollary \ref{cor-4-3} imply the following result,  which  confirms Conjecture \ref{conj-1}.

\begin{thm}\label{thm-4-7}
Let $G$ be a  polyomino graph with a perfect matching.
Then the maximum forcing number of $G$ can be computed in polynomial time.
\end{thm}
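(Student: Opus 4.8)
The plan is to reduce Theorem~\ref{thm-4-7} to a combination of the structural decomposition of polyomino graphs into elementary components and the polynomial-time computability of the Clar number guaranteed by Theorem~\ref{thm-4-6}. First I would recall from \cite{22} that a polyomino graph $G$ with a perfect matching can be decomposed, in polynomial time, into its elementary components $G_1,G_2,\ldots,G_k$; each $G_i$ is an elementary polyomino graph and hence $2$-connected, so Theorem~\ref{thm-4-6} applies to each $G_i$ and lets us compute $Cl(G_i)$ in polynomial time. The key observation needed to glue these together is that forcing numbers are additive over elementary components: for any perfect matching $M$ of $G$, writing $M_i$ for the restriction of $M$ to $G_i$, one has $f(G,M)=\sum_{i=1}^k f(G_i,M_i)$, because a set of edges forces $M$ in $G$ if and only if its trace on each component forces $M_i$ there (the allowed edges of $G$ are exactly the union of the allowed edges of the $G_i$, so alternating cycles never cross component boundaries).

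From additivity I would then pass to the maximum forcing number by noting that every perfect matching $M$ of $G$ restricts to a perfect matching $M_i$ of $G_i$, and conversely any choice of perfect matchings $M_i$ of the $G_i$ assembles (together with the fixed edges, i.e.\ the matching on the part of $G$ not covered by any elementary component) into a perfect matching of $G$; hence $F(G)=\sum_{i=1}^k F(G_i)$. Now Corollary~\ref{cor-4-3} applies to each elementary polyomino $G_i$ — which has a perfect matching since it is elementary — giving $F(G_i)=Cl(G_i)$. Combining, $F(G)=\sum_{i=1}^k Cl(G_i)$, and each summand is computable in polynomial time by Theorem~\ref{thm-4-6}, while $k$ and the decomposition itself are obtained in polynomial time from \cite{22}; therefore $F(G)$ is computable in polynomial time.

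The main obstacle is justifying the additivity of the forcing number over elementary components cleanly, i.e.\ making precise the claim $f(G,M)=\sum_i f(G_i,M_i)$ and the corresponding statement for the maximum. The subtlety is that an elementary component decomposition of a plane bipartite graph is not literally a disjoint union: distinct elementary components may share vertices (they are glued along forbidden edges and cut vertices), and the forbidden edges lie in exactly one perfect matching of $G$. One must argue that every $M$-alternating cycle of $G$ lives entirely inside a single elementary component — which follows because forbidden edges are never on an alternating cycle and the components are precisely the blocks-under-allowed-edges — so that by Theorem~\ref{thm-1-2} the maximum number of disjoint $M$-alternating cycles splits as a sum over components, yielding both $f(G,M)=\sum_i f(G_i,M_i)$ and, after optimizing each term independently, $F(G)=\sum_i F(G_i)$. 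Once this bookkeeping is in place, the rest is a direct citation chain through Theorem~\ref{thm-4-6}, Corollary~\ref{cor-4-3}, and the polynomial-time decomposition of \cite{22}.
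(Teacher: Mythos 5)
Your proposal is correct and follows essentially the same route as the paper: decompose $G$ into elementary components via \cite{22}, use additivity of the forcing number over components to get $F(G)=\sum_i F(G_i)$, then apply Corollary~\ref{cor-4-3} and Theorem~\ref{thm-4-6} to each ($2$-connected, elementary) component. Your justification of the additivity via Theorem~\ref{thm-1-2} and the fact that alternating cycles avoid forbidden edges is a detail the paper merely asserts, but it is the same argument in substance.
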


\section{Concluding remarks}

Theorem \ref{thm-2-1} does not hold for general plane bipartite graphs even for plane elementary bipartite graphs.
Let us see  two elementary bipartite graphs $G$ and  $G'$  as shown in Fig. \ref{fig:weakly-elementary}, where  $G'$ is a subgraph of $G$ bounded by a nice cicle of $G$.
Since there exist at most $12$ (resp. $6$) pairwise disjoint finite faces in $G$ (resp. $G'$)
and the faces with labels $1$ (resp. $2$) form a resonant set of $G$ (resp. $G'$),
we have that $Cl(G)=12$ and  $Cl(G')=6$. There is a maximum resonant set $S'$ (the faces with labels $2$) of $G'$ such that
$G'-S'$ has two perfect matchings.  For any maximum resonant set $S$ of $G$,
$G-S$ is empty.  This example shows that Theorem \ref{thm-2-1} holds for a  graph $G$
does not imply that it holds for each such subgraph of $G$.
Since $G'$ has a perfect matching $M$ such that the faces with labels $2$ and the infinite face of $G'$ are $M$-resonant,
by Theorem \ref{thm-1-2} we have that $F(G')\geq 7$. Since there exist no $8$ pairwise disjoint cycles in $G'$,
we have that $F(G')\leq 7$.
So $F(G')=7$ and $F(G')\neq Cl(G')$. But $F(G)=Cl(G)=12$.

\begin{figure}[!htbp]
\begin{center}
\includegraphics[height=4.0cm]{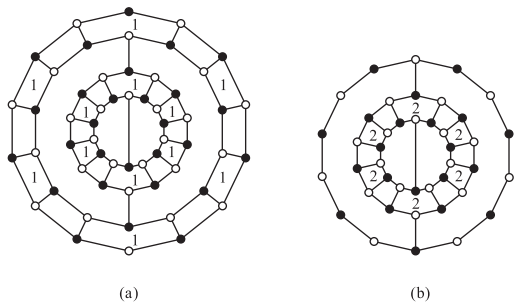}
\caption{(a) An elementary graph $G$,(b) A subgraph $G'$ of $G$.}
\label{fig:weakly-elementary}
\end{center}
\end{figure}


\end{document}